\theoremstyle{plain}
\newtheorem{theorem}{Theorem}[section]
\newtheorem{proposition}[theorem]{Proposition}
\newtheorem{lemma}[theorem]{Lemma}
\newtheorem{corollary}[theorem]{Corollary}
\theoremstyle{definition}
\theoremstyle{remark}
\newtheorem{examples}[theorem]{Examples}
\newtheorem{example}[theorem]{Example}
\numberwithin{equation}{section}
\DeclareSymbolFont{AMSb}{U}{msb}{m}{n}
\DeclareMathSymbol{\A}{\mathbin}{AMSb}{"41}
\DeclareMathSymbol{\B}{\mathbin}{AMSb}{"42}
\DeclareMathSymbol{\C}{\mathbin}{AMSb}{"43}
\DeclareMathSymbol{\D}{\mathbin}{AMSb}{"44}
\DeclareMathSymbol{\E}{\mathbin}{AMSb}{"45}
\DeclareMathSymbol{\F}{\mathbin}{AMSb}{"46}
\DeclareMathSymbol{\G}{\mathbin}{AMSb}{"47}
\DeclareMathSymbol{\HH}{\mathbin}{AMSb}{"48}
\DeclareMathSymbol{\I}{\mathbin}{AMSb}{"49}
\DeclareMathSymbol{\N}{\mathbin}{AMSb}{"4E}
\DeclareMathSymbol{\PP}{\mathbin}{AMSb}{"50}
\DeclareMathSymbol{\Q}{\mathbin}{AMSb}{"51}
\DeclareMathSymbol{\R}{\mathbin}{AMSb}{"52}
\DeclareMathSymbol{\SSS}{\mathbin}{AMSb}{"53}
\DeclareMathSymbol{\T}{\mathbin}{AMSb}{"54}
\DeclareMathSymbol{\U}{\mathbin}{AMSb}{"55}
\DeclareMathSymbol{\V}{\mathbin}{AMSb}{"56}
\DeclareMathSymbol{\W}{\mathbin}{AMSb}{"57}
\DeclareMathSymbol{\X}{\mathbin}{AMSb}{"58}
\DeclareMathSymbol{\Y}{\mathbin}{AMSb}{"59}
\DeclareMathSymbol{\Z}{\mathbin}{AMSb}{"5A}
\def\th#1{\bigskip\noindent{\bf #1}\bgroup\it}
\def\tth#1{\bigskip\noindent{\bf #1}\bgroup}
\def\endth{\egroup\par\bigskip}
\def\endtth{\egroup\par\bigskip}
\newcommand{\coz}{\operatorname{coz}}
\newcommand{\vp}{\varphi}
\newcommand{\cH}{\mathcal H}
\begin{document}

\title
[Holomorphic maps between C*-algebras]
{Orthogonally additive holomorphic maps between C*-algebras}

\author[Q. Bu]{Qingying Bu}
\address[Qingying Bu]{Department of Mathematics, University of Mississippi, University, MS 38677, USA}
\email{qbu@olemiss.edu}

\author[M.-H. Hsu]{Ming-Hsiu Hsu}
\address[Ming-Hsiu Hsu]{Department of  Mathematics,
National Central University, Chung-Li,32054, Taiwan.}
\email{hsumh@math.ncu.edu.tw}

\author[N.-C. Wong]{Ngai-Ching Wong}
\address[Ngai-Ching Wong]{Department of Applied Mathematics,
National Sun Yat-sen University, Kaohsiung, 80424, Taiwan.}

\email{wong@math.nsysu.edu.tw}

\thanks{This research is supported partially by the Taiwan NSC grants 099-2811-M-110-015
and  102-2115-M-110-002-MY2}

\date{August 3, 2014; March 20, 2015}
% a version after submited to Studia Math.

\keywords{Holomorphic maps; conformal maps; homogeneous polynomials; orthogonally additive;
zero product preserving; $n$-isometry;
Banach-Stone theorems; C*-algebras}

\subjclass[2000]{17C65, 46G25, 46L05, 47B33}

\begin{abstract}
Let $A,B$ be C*-algebras,  $B_A(0;r)$ the open ball in $A$ centered at $0$ with radius $r>0$, and
$H:B_A(0;r)\to B$ an orthogonally additive
holomorphic map.
If $H$ is zero product preserving on positive elements in $B_A(0;r)$,
we show, in the commutative case when  $A=C_0(X)$ and $B=C_0(Y)$, that
there exist weight functions
$h_n$'s and a symbol map $\varphi: Y\to X$ such that
$$
H(f)=\sum_{n\geq1} h_n (f\circ\varphi)^n, \quad\forall f\in B_{C_0(X)}(0;r).
$$
In the general case,
we show that if $H$ is also conformal then
there exist central  multipliers $h_n$'s of $B$ and a surjective Jordan
isomorphism $J: A\to B$ such that
$$
H(a) = \sum_{n\geq1} h_n J(a)^n, \quad\forall a\in B_A(0;r).
$$
If, in addition, $H$ is zero product preserving on the whole $B_A(0;r)$, then $J$ is an algebra isomorphism.
%Similar conclusions  hold for orthogonally additive $n$-homogeneous polynomials which are $n$-isometries.
\end{abstract}

\maketitle

\section{Introduction}

Let $E$ and $F$ be (complex) Banach spaces, and $n$ a positive integer. A map $P: E \to F$ is called a \emph{bounded
$n$-homogeneous polynomial} if there is a  bounded  symmetric  $n$-linear
operator $L: E\times\cdots\times E \to F$ such that
$$
P(x) = L(x,\ldots, x), \quad\forall x \in E.
$$
Let  $B_{E}(a;r)$ denote the open ball of $E$ centered at $a$ with radius $r>0$.
A map $H: U\to F$ is said to be \emph{holomorphic} on a nonempty open subset $U$ of $E$
if for each $a $ in $U$ there exist an open ball $B_E(a;r) \subset U$ and
a unique sequence of bounded $n$-homogeneous polynomials $P_n:E \to F$ such that
$$
H(x) = \sum_{n=0}^\infty P_n(x - a)
$$
uniformly for all $x $ in $B_E(a;r)$.
Here, $P_0$ is the constant function with value $H(a)$.
After translation, we can assume $a=0$, and  a holomorphic function $H: B_E(0;r)\to F$ has its Taylor series at zero:
$$
H(x) = \sum_{n=0}^\infty P_n(x)
$$
uniformly for all $x$ in $B_E(0;r)$.  In this case, $P_n$ is given by the vector-valued integration as
\begin{align*}%\label{eq:cauchy}
P_n(x) = \frac{1}{2\pi i}\int_{|\lambda| = 1} \frac{H(\lambda x)}{\lambda^{n+1}}\,d\lambda, \qquad n = 0, 1, 2, \ldots.
\end{align*}
See, for example, \cite[pp.\ 40--47]{Mu}.
For the general theory of homogeneous polynomials and holomorphic functions, we refer to \cite{Mu, Di}.

When $E, F$ are function spaces or Banach algebras,
a  map $\Phi:E\to F$  is said to be {\it
orthogonally additive} if
$$
fg = gf=0\quad\text{implies}\quad \Phi(f + g) = \Phi(f) + \Phi(g),\quad \forall f,g\in E,
$$
and  \emph{zero product preserving} if
$$
fg = 0\quad\text{implies}\quad \Phi(f)\Phi(g) = 0,\quad \forall f,g\in E.
$$
The notions of orthogonally additive and zero product preserving transformations
have been studied by many authors, for example, \cite{Ar83,Ja90,JW96,HS99,CKLW03,araujo04,KLW04,PV,BLL06,CLZ06,PPV08,LNW12,LW13}.

%In \cite{CLZ10,JPZ12,PP12} orthogonally additive scalar holomorphic functions of bounded type on $C(K)$-spaces and on general C*-algebras are studied.
%We note that every commutative C*-algebra is isometrically *-isomorphic to the algebra $C_0(X)$ of continuous functions
%on a locally compact Hausdorff space $X$ vanishing at infinity.
%In general, every C*-algebra is a norm closed $*$-subalgebra of the algebra $B(\cH)$ of bounded linear operators
%on some Hilbert space $\cH$.

The main goal of this paper is to establish  Theorems \ref{thm:BSTPdphf} and
\ref{thm:Conformal}.

Let $A, B$ be C*-algebras and $H: B_A(0;r)\to B$ an orthogonally additive and zero product preserving holomorphic map.
In the commutative case when $A=C_0(X)$ and $B=C_0(Y)$, the algebras of continuous complex-valued functions vanishing at infinity,
we show in Theorem \ref{thm:BSTPdphf} that
%\begin{align}\label{eq:Thm3.4}
$$
H(f)(y) = \sum_{n\geq 1} h_n(y)f(\varphi(y))^n, \quad\forall y\in Y, \forall f\in B_{C_0(X)}(0;r).
$$
%\end{align}
Here,
$\varphi: Y\rightarrow X$ is continuous  wherever some weight function $h_n$ is nonvanishing.

When $A,B$ are general C*-algebras, we assume further that $H$ is conformal, i.e., the derivative $P_1$ of $H$ at $0$ is a bounded
invertible linear map from $A$ onto $B$.  We show in Theorem \ref{thm:Conformal} that
there is a sequence $\{h_n\}$ in the center of the multiplier algebra $M(B)$ of $B$ and
an algebra isomorphism $J:A\to B$ such that
%\begin{align}\label{eq:conformal}
$$
H(a) = \sum_{n\geq1} h_n J(a)^n, \quad\forall a\in B_A(0;r).
$$
%\end{align}
If we assume $H$ zero product preserving only on the positive part $A_+$ of $A$ intersecting
$B_A(0;r)$, then $J$ is still a Jordan isomorphism.

To achieve Theorems \ref{thm:BSTPdphf} and \ref{thm:Conformal}, we need to study
homogeneous polynomials first.
In  \cite{Su91}, Sundaresan
characterized the linearization of orthogonally additive $n$-homogeneous
polynomials on $L_p$-spaces.   Several authors have  extended
his results to, e.g., $C(K)$-spaces \cite{PV},
C$^\ast$-algebras \cite{PPV08}, and Banach lattices \cite{BLL06, BB12}.
Adopting these important linearization tools (see Section \ref{s:linear}), in this paper
we  establish Banach-Stone type theorems for orthogonally additive
homogeneous polynomials and holomorphic maps between C*-algebras in Sections \ref{s:zp} and \ref{s:isometry}.

A counterpart of   Theorem \ref{thm:Conformal} is given for orthogonally additive
and zero product preserving holomorphic functions between matrix algebras in \cite{BLW14}.  See Theorem \ref{thm:main-matrix}
and Example \ref{ex:trival-multiplication}  for details.
In a very interesting recent paper \cite{GPPI13}, the authors there consider
orthogonally additive holomorphic maps $H$ between general C*-algebras, which  preserve doubly orthogonality, i.e.,
$$
a^*b = ab^* =0 \quad\text{implies}\quad H(a)^*H(b) = H(a)H(b)^* =0.
$$
In this case, with the extra assumption that the range of $H$ contains an invertible element, a corresponding result to   Theorem \ref{thm:Conformal} is established in \cite{GPPI13} through the technique of JB*-algebras.
Nevertheless, we will see in Example \ref{ex:trival-multiplication} that one cannot directly make use of this new result to study zero product preserving holomorphic maps.

In Theorems \ref{thm:BSTPiso} and \ref{thm:C-isom}, we establish similar representation results for orthogonally additive homogeneous $n$-polynomials,
$P: A\to B$ between C*-algebras, which are  \emph{$n$-isometry} of positive elements, i.e.,
$$
\|P(a)\| = \|a\|^n,\quad\forall a\in A_+.
$$

To end the introduction, we would like to mention \cite{CLZ10,JPZ12,PP12} for some related contributions
to orthogonally additive scalar holomorphic functions of bounded type on $C(K)$-spaces and on general C*-algebras.

%%%%%%%%%%%%%%%%%%%%%%%%%%%%%%%%%%%%%%%%%%%%%%%%%%%%%%%%%%%%%%%%%%%%%%%%%%%%%%%%%

\section{Preliminaries}\label{s:linear}

\subsection{Orthogonally additive and zero product preserving polynomials}

For a C*-algebra $A$, we write $A_{sa}$ and $A_+$ for the set  of all its self-adjoint and positive elements, respectively.
The following proposition follows from  \cite[Lemma 2.1]{BLW14} and its proof (see also \cite[Proposition 6]{GPPI13}).

\begin{proposition}\label{prop}
Let $A,B$ be C*-algebras.  Let
$H: B_{A}(0;r)\to B$ be a holomorphic function with Taylor series
$H = \sum_{n=0}^\infty P_n$ at zero.  Let $D= A$, $A_{sa}$ or $A_+$.
\begin{enumerate}[(a)]
\item If $H$ is orthogonally additive on $B_{E}(0;r)\cap D$ then each $P_n$ is orthogonally additive on $D$.
\item If $H$ is zero product preserving on $ B_{E}(0;r)\cap D$ then each $P_n$ is  zero product preserving on $D$.
Indeed, we have
$$
fg=0 \quad\implies\quad P_m(f)P_n(g) =0, \quad\forall f,g\in D, \forall m,n = 0,1,2,\ldots.
$$
\end{enumerate}
\end{proposition}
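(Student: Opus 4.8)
The plan is to exploit the homogeneity $P_n(\lambda x)=\lambda^n P_n(x)$ together with the hypotheses applied to \emph{scaled} arguments, and then to recover the statement for each polynomial by comparing coefficients of power series. The technical device throughout is that, for fixed $f,g\in D$, the element $\lambda f$ (and, later, $\mu g$) remains in $B_A(0;r)\cap D$ for $\lambda$ (and $\mu$) ranging over a neighborhood of $0$: over a complex disc when $D=A$, and over a real interval $(0,\delta)$ when $D=A_{sa}$ or $D=A_+$, since scaling by a nonnegative real preserves self-adjointness and positivity. Orthogonality is likewise preserved under scaling, because $(\lambda f)(\mu g)=\lambda\mu\,fg$.

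For part (a), fix $f,g\in D$ with $fg=gf=0$. For small $\lambda$ we then have $(\lambda f)(\lambda g)=(\lambda g)(\lambda f)=0$, so orthogonal additivity of $H$ gives $H(\lambda(f+g))=H(\lambda f)+H(\lambda g)$. Expanding each side in its Taylor series and using $P_n(\lambda x)=\lambda^n P_n(x)$ yields
$$
\sum_{n=0}^\infty \lambda^n\bigl(P_n(f+g)-P_n(f)-P_n(g)\bigr)=0
$$
for all such $\lambda$. Since a $B$-valued power series vanishing on a neighborhood of $0$ (or, after composing with bounded functionals, on a real interval accumulating at $0$) must have all coefficients zero, we conclude $P_n(f+g)=P_n(f)+P_n(g)$ for every $n$, i.e.\ the orthogonal additivity of $P_n$ on $D$.

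For part (b), the extra subtlety is that a single scaling parameter only controls the diagonal: from $H(\lambda f)H(\lambda g)=0$ one extracts $\sum_{m+n=k}P_m(f)P_n(g)=0$ for each $k$, which is weaker than the claimed $P_m(f)P_n(g)=0$ for every pair $(m,n)$. The key idea is to use two \emph{independent} scaling parameters. Fix $f,g\in D$ with $fg=0$; then $(\lambda f)(\mu g)=\lambda\mu\,fg=0$, so zero product preservation of $H$ gives $H(\lambda f)H(\mu g)=0$. Because both Taylor series converge absolutely in the Banach algebra $B$ for small $\lambda,\mu$, their product may be expanded and rearranged into the double series
$$
\sum_{m,n=0}^\infty \lambda^m\mu^n\,P_m(f)P_n(g)=0,
$$
valid on a bidisc (or a product of real intervals). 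This decoupling step is the main obstacle; once it is in place, the two-variable identity theorem---first fixing $\mu$ and comparing powers of $\lambda$, then comparing powers of $\mu$---forces every coefficient $P_m(f)P_n(g)$ to vanish. This delivers both the stated cross-relation and, by taking $m=n$, the zero product preservation of each $P_n$ on $D$.
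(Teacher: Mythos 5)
Your proof is correct, including the two points where care is needed: the decoupling of the indices $m,n$ in part (b) via two independent scaling parameters, and the restriction to real scalars $\lambda,\mu\in(0,\delta)$ when $D=A_{sa}$ or $A_+$ (so that scaled elements stay in $D$), compensated by the identity theorem applied after composing with bounded functionals. Note that the paper gives no inline proof: it defers to \cite[Lemma 2.1]{BLW14} (see also \cite[Proposition 6]{GPPI13}). The proof there runs on the same engine as yours but in integral form: one extracts the homogeneous terms by the Cauchy formula
$$
P_n(x)=\frac{1}{2\pi i}\int_{|\lambda|=1}\frac{H(\lambda x)}{\lambda^{n+1}}\,d\lambda,
$$
and for part (b) writes $P_m(f)P_n(g)$ as a double contour integral of $H(\lambda f)H(\mu g)\lambda^{-m-1}\mu^{-n-1}$, which vanishes because $(\lambda f)(\mu g)=0$; the same two-parameter trick, with coefficient extraction done by integration rather than by comparing power-series coefficients. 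The two mechanisms are equivalent (the integral \emph{is} the coefficient functional), but each buys something: the contour-integral route gives the vanishing of $P_m(f)P_n(g)$ in one line once holomorphy in $(\lambda,\mu)$ is established, while your series route avoids vector-valued integration entirely and makes the real-interval issue for $A_{sa}$ and $A_+$ completely explicit --- in the integral approach one must first invoke the identity theorem to propagate $H(\lambda f)H(\mu g)=0$ from the real bi-interval to the full bidisc before the contour integral over $|\lambda|=|\mu|=1$ (suitably rescaled when $\|f\|$ or $\|g\|\geq r$) can be used, a step that is easy to gloss over.
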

It follows from  Proposition \ref{prop}
that if $H$ is orthogonally additive or zero product preserving, then the constant term $P_0 = 0$.

The following linearization of orthogonally additive $n$-homogeneous
polynomials is the key tool of us.  Note that although the theorem in \cite{PV} is stated for compact spaces, the proof
there works also for locally compact spaces.

\begin{theorem}[{\cite[Theorem 2.1]{PV}; see also \cite{BLL06, CLZ06}}]\label{thm:PV}
 Let $X$ be a locally compact Hausdorff space and $F$ a Banach space.  Let $P: C_0(X)\to F$
 be a bounded orthogonally additive $n$-homogeneous polynomial. Then there exists a bounded linear
operator $T : C_0(X)\to F$ such that
$$
P(f)  = T(f^n),\quad \forall f\in C(K).
$$
\end{theorem}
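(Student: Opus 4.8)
The plan is to build $T$ on the positive cone $C_0(X)_+$ of nonnegative real-valued functions and then extend it by linearity. Every $h\in C_0(X)_+$ is the $n$-th power $h=(h^{1/n})^n$ of the continuous nonnegative function $h^{1/n}$, so I would set $T(h):=P(h^{1/n})$ for $h\in C_0(X)_+$. Positive homogeneity is immediate from the $n$-homogeneity of $P$, since $T(\lambda h)=P(\lambda^{1/n}h^{1/n})=\lambda P(h^{1/n})=\lambda T(h)$ for $\lambda\ge 0$, and the bound $\|T(h)\|=\|P(h^{1/n})\|\le\|P\|\,\|h^{1/n}\|^n=\|P\|\,\|h\|$ records boundedness on the cone. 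Writing a general $f$ as $f=(u_1-u_2)+i(u_3-u_4)$ with $u_j\in C_0(X)_+$ and declaring $T(f)=(T(u_1)-T(u_2))+i(T(u_3)-T(u_4))$, the whole problem collapses to proving that $T$ is additive on $C_0(X)_+$, as this is exactly what makes the extension well defined and $\C$-linear.

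Let $L$ denote the symmetric $n$-linear operator with $P(x)=L(x,\dots,x)$. The key structural input is the vanishing of the mixed terms of $L$ on disjointly supported functions. If $f,g\in C_0(X)$ satisfy $fg=0$, then orthogonal additivity gives $P(tf+g)=P(tf)+P(g)$ for every scalar $t$; expanding the left-hand side by multilinearity and cancelling the pure terms yields
$$
\sum_{k=1}^{n-1}\binom{n}{k}\,t^{k}\,L(\underbrace{f,\dots,f}_{k},\underbrace{g,\dots,g}_{n-k})=0\qquad\text{for all }t,
$$
whence every coefficient $L(f^{(k)},g^{(n-k)})$ vanishes for $1\le k\le n-1$. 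In particular $T$ is additive on positive functions with disjoint supports, and the same substitution technique shows that $L$ annihilates any $n$-tuple in which two entries are disjointly supported.

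The main obstacle is to pass from disjoint supports to arbitrary overlapping $u,v\in C_0(X)_+$. The route I would take is to show that $L(f_1,\dots,f_n)$ depends on its arguments only through the pointwise product $f_1\cdots f_n$. Granting this, additivity of $T$ on the cone follows from multilinearity by isolating one coordinate: in the unital case $X$ compact, $T(u+v)=L(u+v,\mathbf 1,\dots,\mathbf 1)=L(u,\mathbf 1,\dots,\mathbf 1)+L(v,\mathbf 1,\dots,\mathbf 1)=T(u)+T(v)$, and in the non-unital case one runs the same computation with an approximate unit and passes to the limit using boundedness of $P$. Establishing the product-dependence of $L$ is the real work: given two tuples with equal product, I would refine them by a finite partition of a compact set on which the product is essentially concentrated, approximate each factor uniformly by functions subordinate to the partition, and connect the two tuples by moves that either transfer mass between disjointly supported pieces (leaving $L$ unchanged by the vanishing lemma) or redistribute mass within a single coordinate (covered by multilinearity), controlling the error through continuity of $P$. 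Once $T$ is a well-defined bounded linear operator, the desired identity holds: for positive $f$ it is the definition, and for general $f\in C_0(X)$ both $f\mapsto T(f^n)$ and $f\mapsto P(f)=L(f,\dots,f)$ are $n$-homogeneous polynomials agreeing on $C_0(X)_+$, which is a determining set for such polynomials because a polynomial vanishing on every nonnegative combination of positive functions has all its coefficients zero. Hence $T(f^n)=P(f)$ for all $f$, as required.
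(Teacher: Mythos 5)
The paper itself offers no proof of this statement: it quotes \cite[Theorem 2.1]{PV} (with the remark that the compact-space proof extends verbatim to locally compact $X$), so your attempt must be measured against the cited proofs (polymeasures in \cite{PV}, or the Borel/bidual extension argument of \cite{CLZ06}). Your scaffolding is fine and matches how those proofs begin: defining $T(h)=P(h^{1/n})$ on the cone, reducing everything to additivity of $T$ on $C_0(X)_+$, and the Vandermonde-type argument showing that $fg=0$ forces all mixed terms $L(f,\dots,f,g,\dots,g)$ to vanish are all correct (and your closing remark that the positive cone determines an $n$-homogeneous polynomial is also fine, via the finite-difference polarization formula).

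The gap is in the two steps you lean on after that. First, the claim that ``the same substitution technique shows that $L$ annihilates any $n$-tuple in which two entries are disjointly supported'' is not what the substitution gives. Expanding $P(tf+g)=P(tf)+P(g)$ (or, via polarization, $P(tu+v)$ with $u$ in the ideal orthogonal to the ideal containing $v$) only controls tuples whose entries split into \emph{two blocks that are elementwise orthogonal to each other}; it says nothing about $L(f,g,h_3,\dots,h_n)$ when $fg=0$ but $h_3$ overlaps both $f$ and $g$, because orthogonal additivity simply does not apply to $tf+g+sh_3$. That general vanishing statement is true, but a posteriori: it is equivalent to $L(f_1,\dots,f_n)=T(f_1\cdots f_n)$, i.e.\ to the theorem itself. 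Second, the product-dependence of $L$, which you correctly flag as ``the real work,'' cannot be established by the partition-and-approximation scheme you sketch, for a structural reason: a continuous function cannot be uniformly approximated by a finite sum of pairwise disjointly supported continuous pieces (any such sum vanishes on the closed set separating the supports, so it misses $f\equiv 1$ by a fixed amount there). Hence any continuous partition of unity has overlapping ``collars,'' and the cross terms of $L$ involving one factor from each side of the collar are neither zero --- the vanishing lemma does not apply to them --- nor small, since their only available bound is $\|L\|\prod_i\|h_i\|$. This is precisely the obstruction that forces the known proofs to leave $C_0(X)$: \cite{PV} represents $L$ by a polymeasure and shows orthogonal additivity concentrates it on the diagonal, while \cite{CLZ06} extends $P$ weak-star continuously to the bounded Borel functions in the bidual, where characteristic functions of disjoint Borel sets provide genuine orthogonal decompositions. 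Without some such step your argument does not close.
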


As  commutative   C*-algebras are algebras of continuous functions, the following can be considered as the
non-commutative version of Theorem \ref{thm:PV}.

\begin{theorem}[\cite{PPV08,BFGP09}] \label{thm:C-form}
Let $A$ be a C*-algebra, $F$ a complex Banach space, and $P:A\to F$ a bounded
$n$-homogeneous polynomial. The following are equivalent.
\begin{enumerate}[(1)]
    \item There exists a bounded linear operator $T : A \to F$ such that
$$
P(a) = T(a^n), \quad\forall a\in A.
$$

    \item $P$ is orthogonally additive on $A$, i.e.,
    $$
    ab=ba=0\quad\implies P(a+b)=P(a) + P(b), \quad\forall a,b\in A.
    $$

    \item $P$ is orthogonally additive on  $A_{sa}$, i.e.,
    $$
    \quad ab=0\quad\implies P(a+b)= P(a) + P(b), \quad\forall a,b\in A_{sa}.
    $$
\end{enumerate}
\end{theorem}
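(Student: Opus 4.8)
The plan is to prove the cyclic chain $(1)\Rightarrow(2)\Rightarrow(3)\Rightarrow(1)$, with the last implication carrying essentially all of the difficulty. For $(1)\Rightarrow(2)$, suppose $P(a)=T(a^n)$ with $T$ bounded linear, and take $a,b$ with $ab=ba=0$. I would expand $(a+b)^n$ as the sum of all length-$n$ words in the letters $a$ and $b$; any word that is not constant has two adjacent distinct letters, hence a factor $ab$ or $ba$, which is $0$. Thus $(a+b)^n=a^n+b^n$, and linearity of $T$ gives $P(a+b)=T(a^n)+T(b^n)=P(a)+P(b)$. For $(2)\Rightarrow(3)$: if $a,b\in A_{sa}$ and $ab=0$, then $ba=(ab)^*=0$ as well, so the hypothesis of $(2)$ is met and $(3)$ is merely its restriction to $A_{sa}$.

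The substance is $(3)\Rightarrow(1)$. First I define $T$ on the positive cone: for $b\in A_+$ the unique positive $n$-th root $b^{1/n}$ exists, and I set $T(b):=P(b^{1/n})$, so that $T(a^n)=P(a)$ holds automatically for $a\in A_+$. Orthogonal additivity immediately handles orthogonal positive pairs: if $b,c\in A_+$ with $bc=0$, then $b^{1/n}c^{1/n}=0$ and $(b^{1/n}+c^{1/n})^n=b+c$, whence $T(b+c)=P(b^{1/n}+c^{1/n})=P(b^{1/n})+P(c^{1/n})=T(b)+T(c)$ by $(3)$. For \emph{commuting} positive $b,c$, I would pass to the abelian C*-subalgebra they generate, identify it with some $C_0(\Omega)$, and invoke the commutative linearization Theorem \ref{thm:PV}: the restriction of $P$ there has the form $f\mapsto S(f^n)$ for a bounded linear $S$, forcing $T$ to agree with $S$ on this subalgebra and hence to be additive on commuting positive elements.

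The main obstacle is additivity of $T$ on \emph{arbitrary} (non-commuting) positive elements together with its boundedness; this is precisely the content of the noncommutative linearization in \cite{PPV08, BFGP09}. The governing identity comes from comparing coefficients: for $b,c\in A_+$ with $bc=0$ and scalars $s,t\ge 0$, orthogonal additivity and $n$-homogeneity give $P(sb+tc)=s^nP(b)+t^nP(c)$, while expanding through the symmetric $n$-linear form $\check P$ associated with $P$ shows that all mixed terms $\check P(b,\dots,b,c,\dots,c)$ (with both $b$ and $c$ present) must vanish; one then leverages this vanishing to extend additivity beyond the orthogonal and commuting cases. Granting a bounded additive $T$ on $A_+$, I extend it $\R$-linearly to $A_{sa}=A_+-A_+$ and then $\C$-linearly to $A=A_{sa}+iA_{sa}$.

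It remains to verify $P(a)=T(a^n)$ for all $a\in A$, not only positive ones. For self-adjoint $a=a_+-a_-$, orthogonality of $a_+$ and $a_-$ gives $a^n=a_+^n+(-1)^na_-^n$, while $(3)$ together with $n$-homogeneity gives $P(a)=P(a_+)+(-1)^nP(a_-)$; since $T(a_+^n)=P(a_+)$ and $T(a_-^n)=P(a_-)$ by construction, linearity of $T$ yields $P(a)=T(a^n)$ on $A_{sa}$. Finally, $P$ and the map $a\mapsto T(a^n)$ are bounded $n$-homogeneous polynomials agreeing on the real form $A_{sa}$; because $A=A_{sa}\oplus iA_{sa}$ and the associated symmetric $n$-linear forms are complex multilinear, agreement on $A_{sa}^n$ forces agreement everywhere, so $P(a)=T(a^n)$ for all $a\in A$, completing $(3)\Rightarrow(1)$.
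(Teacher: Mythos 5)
Your implications (1)$\Rightarrow$(2) and (2)$\Rightarrow$(3) are correct: the word expansion of $(a+b)^n$ collapses to $a^n+b^n$ when $ab=ba=0$, and for self-adjoint $a,b$ the identity $ba=(ab)^*$ makes (3) a restriction of (2). The framing steps of (3)$\Rightarrow$(1) are also sound: defining $T(b):=P(b^{1/n})$ on $A_+$, checking additivity on orthogonal positive pairs, obtaining additivity on commuting positive pairs by restricting to $C^*(b,c)\cong C_0(\Omega)$ and invoking Theorem \ref{thm:PV}, and---once a bounded linear $T$ exists---the passage from $A_{sa}$ to $A$ by polarization (the polarization formula uses only real scalar combinations, so agreement of two $n$-homogeneous polynomials on $A_{sa}$ forces agreement of their symmetric $n$-linear forms on $A_{sa}^n$, hence on $A^n$ by complex multilinearity).

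However, there is a genuine gap, and it sits exactly where the theorem lives. Additivity of $T$ on \emph{arbitrary} pairs of positive elements, together with its boundedness---equivalently, that the locally defined linearizations on the various commutative subalgebras patch into a single globally linear, bounded map---is the entire content of the noncommutative linearization theorem, and you do not prove it: you state that it ``is precisely the content of \cite{PPV08,BFGP09}'' and then proceed ``granting a bounded additive $T$ on $A_+$.'' Since Theorem \ref{thm:C-form} \emph{is} the result of \cite{PPV08,BFGP09}, this is circular. The ``governing identity'' you offer---vanishing of the mixed terms $\check P(b,\dots,b,c,\dots,c)$ for orthogonal $b,c$---is only a polarized restatement of orthogonal additivity; no mechanism is given for exploiting it when $b$ and $c$ neither commute nor are orthogonal, and none is cheap: the published arguments pass to the Aron--Berner extension of $P$ to the von Neumann algebra $A^{**}$, linearize there on elements with finite spectrum using spectral projections, and then approximate in norm (\cite{PPV08}), or deduce the statement from the theory of orthogonality preservers (\cite{BFGP09}). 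Note also that the paper itself offers no proof to compare against---it imports the theorem from those same references---so your write-up, as it stands, likewise records the easy reductions around the theorem rather than a proof of it.
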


In view of Theorem \ref{thm:C-form}, the assumption in Theorem \ref{thm:PV} can be weakened to
$P$ being orthogonally additive on $C_0(X)_{sa}$.
So due to Proposition \ref{prop} when we say a holomorphic map or a homogeneous polynomial on a C*-algebra to be orthogonally additive, it does not matter it is orthogonally additive on all elements or just on self-adjoint ones.

Similarly, we have

\begin{lemma}\label{lem:OM}
Let $A,B$ be C*-algebras.  Let $P:A\to B$ be a bounded orthogonally additive $n$-homogeneous polynomial.
Consider the following conditions.
\begin{enumerate}[(1)]
    \item $P$ is zero product preserving on $A$, i.e.,
     $$
    \quad ab=0\quad\implies P(a)P(b)=0, \quad\forall a,b\in A.
    $$
    \item $P$ is zero product preserving on $A_{sa}$, i.e.,
     $$
    \quad ab=0\quad\implies P(a)P(b)=0, \quad\forall a,b\in A_{sa}.
    $$
    \item $P$ is zero product preserving on $A_+$, i.e.,
     $$
    \quad ab=0\quad\implies P(a)P(b)=0, \quad\forall a,b\in A_+.
    $$
\end{enumerate}
We have (1) $\Rightarrow$  (2) $\Leftrightarrow$ (3) in general; and all
three conditions are equivalent when $A$ is commutative.
\end{lemma}
\begin{proof}
It is clear that (1) $\Rightarrow$  (2) $\Rightarrow$ (3).
Suppose  $a,b\in A_{sa}$ with $ab=0$.
Write $a=a_+-a_-$ and $b=b_+ - b_-$ as the orthogonal differences of their positive and negative parts.
By functional calculus, we see that
$$
a_+b_+ = a_+b_- = a_-b_+ = a_-b_- = 0.
$$
Now the orthogonally additivity of $P$ and condition (3) give
$$
P(a)P(b) = P(a_+)P(b_+) + P(a_+)P(-b_-) + P(-a_-)P(b_+)+ P(-a_-)P(-b_-) = 0.
$$
Hence, we have (3) $\Rightarrow$ (2).

Now suppose $A=C_0(X)$ is commutative.  We verify the implication
(3) $\Rightarrow$ (1).
Let $T:A\to B$ be the bounded linear operator associated to $P$ as in
Theorem \ref{thm:C-form}.
%We claim that $T$ is   zero product preserving.

First let $f, g$ in $C_0(X)$ be real-valued such that $fg = 0$. Then
$$
f_+  g_+ = f_+ g_- = f_-  g_+ = f_-  g_- = 0,
$$
and hence,
$$
\sqrt[n]{f_+} \sqrt[n]{g_+} = \sqrt[n]{f_+}  \sqrt[n]{g_-}
=\sqrt[n]{f_-}  \sqrt[n]{g_+} =\sqrt[n]{f_-}  \sqrt[n]{g_-} = 0.
$$
It follows from Theorem \ref{thm:C-form}(1) and condition (3) that
\begin{eqnarray*}
& & T(f)  T(g) \\
%&=& \big(T(f_+) - T(f_-)\big)  \big(T(g_+) - T(g_-)\big)\\
 &=& T(f_+)  T(g_+) - T(f_+)  T(g_-) -\ T(f_-)  T(g_+) + T(f_-)  T(g_-)\\
 &=& P(\sqrt[n]{f_+})  P(\sqrt[n]{g_+}) - P(\sqrt[n]{f_+})  P(\sqrt[n]{g_-}) -\ P(\sqrt[n]{f_-})  P(\sqrt[n]{g_+}) + P(\sqrt[n]{f_-})  P(\sqrt[n]{g_-})\\
 &=& 0.
\end{eqnarray*}

Let $f_1, f_2, g_1, g_2$ in $C_0(X)$ be real-valued such that $(f_1 + ig_1)\cdot (f_2 + ig_2) = 0$. Then
for each $t$ in $X$, we have $f_1(t)=g_1(t)=0$ or $f_2(t)=g_2(t)=0$.
Therefore,
$$
f_1\cdot f_2 = g_1\cdot g_2 = f_1\cdot g_2 = g_1\cdot f_2 = 0.
$$
It follows from above that
\begin{eqnarray*}
 && T(f_1 + ig_1) \cdot T(f_2 + ig_2)\\
  &=& T(f_1)\cdot T(f_2) + iT(f_1)\cdot T(g_2)
  +\ iT(g_1)\cdot T(f_2) - T(g_1)\cdot T(g_2)
 = 0.
\end{eqnarray*}
Therefore, $T$ preserves zero products.  Let $f,g\in C_0(X)$ with $fg=0$.  Then
$f^ng^n=0$ as well.  Consequently,
$$
P(f)P(g)=T(f^n)T(g^n)=0,
$$
and  the assertion follows.
\end{proof}

We remark that the surjective linear isometry $a\mapsto a^t$ gives a counter example for the implication
(2) $\Rightarrow$ (1) in Lemma \ref{lem:OM},
when $A=B=B(\cH)$.  Here,
$a^t$ denotes the transpose of an operator $a$ in $B(\cH)$ with
respect to an arbitrary but fixed orthonormal basis of the Hilbert space $\cH$.

\subsection{Classical Banach-Stone type theorems}

The
classical Banach-Stone theorem states in the following two ways.

\begin{theorem}[{Banach-Stone Theorem for isometries; see, e.g., \cite{Conway90}}]\label{thm:BSTISO}
Let $T: C_0(X) \rightarrow C_0(Y)$ be a bounded
linear operator. If $T$ is a surjective isometry, then there
exists a homeomorphism $\varphi: Y\rightarrow X$ such that
$$
Tf(y) = h(y)f(\varphi(y)), \quad\forall\; f \in C_0(X), \;\forall\; y \in Y.
$$
Here, $h$ is a continuous unimodular scalar function on $Y$, i.e., $|h(y)| = 1, \forall y \in Y$.
\end{theorem}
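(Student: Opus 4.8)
The plan is to pass to the Banach-space adjoint and exploit the extremal structure of the dual balls. I would identify $C_0(X)^\ast$ with $M(X)$, the space of regular complex Borel measures on $X$, under the bilinear pairing $\langle f,\mu\rangle=\int_X f\,d\mu$, and likewise $C_0(Y)^\ast=M(Y)$. The first and most important ingredient is the Arens--Kelley description of the extreme points of the closed unit ball of $M(X)$: they are precisely the unimodular point masses $\lambda\delta_x$ with $|\lambda|=1$ and $x\in X$, and similarly for $M(Y)$. I would record this as the structural backbone of the argument.

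Next, since $T$ is a surjective linear isometry, its adjoint $T^\ast\colon M(Y)\to M(X)$ is again a surjective linear isometry, hence carries the closed unit ball of $M(Y)$ onto that of $M(X)$ and therefore maps extreme points bijectively onto extreme points. Thus for each $y\in Y$ there are a unimodular scalar $h(y)$ and a point $\varphi(y)\in X$ with $T^\ast\delta_y=h(y)\,\delta_{\varphi(y)}$. Because $T^\ast$ is complex linear it commutes with the circle action $\lambda\delta_\bullet\mapsto\lambda\delta_\bullet$, so it descends to a well-defined bijection of orbit spaces, which is exactly $\varphi\colon Y\to X$; this yields bijectivity of $\varphi$ for free. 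Unwinding the pairing gives, for every $f\in C_0(X)$ and $y\in Y$,
$$
Tf(y)=\langle Tf,\delta_y\rangle=\langle f,T^\ast\delta_y\rangle=h(y)\,f(\varphi(y)),
$$
which is the desired representation.

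It then remains to upgrade $\varphi$ and $h$ to continuous maps. For continuity of $\varphi$ at $y_0$ I would argue by contradiction: if $\varphi(y_\alpha)\not\to\varphi(y_0)$ along some net $y_\alpha\to y_0$, pick by Urysohn a function $f\in C_0(X)$ with $f(\varphi(y_0))=1$ and support inside a neighborhood missing a cofinal subnet of the $\varphi(y_\alpha)$; then $Tf(y_\alpha)=0$ while $Tf(y_0)=h(y_0)\neq 0$, contradicting continuity of $Tf$. Once $\varphi$ is continuous, choosing $f$ identically $1$ on a neighborhood of $\varphi(y_0)$ makes $h=Tf$ on a neighborhood of $y_0$, so $h$ is continuous. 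Finally, applying the entire argument to the surjective isometry $T^{-1}\colon C_0(Y)\to C_0(X)$ produces a continuous $\psi\colon X\to Y$ inverse to $\varphi$, so $\varphi$ is a homeomorphism.

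The main obstacle is the first step: establishing the Arens--Kelley identification of extreme points and, relatedly, controlling the behaviour ``at infinity'' when $X$ is only locally compact, where one must ensure that the image measure really is a single scaled point mass, with no mass escaping to infinity and no nontrivial convex decomposition surviving. This is precisely where the isometry hypothesis does the essential work. The continuity and bijectivity conclusions are then comparatively routine consequences of the support and Urysohn manipulations above.
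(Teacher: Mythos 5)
Your proof is correct and is essentially the classical argument behind the paper's cited reference \cite{Conway90}: identify $C_0(\cdot)^\ast$ with regular Borel measures, invoke the Arens--Kelley description of the extreme points of the dual ball, push extreme points through the adjoint of the surjective isometry to get $T^\ast\delta_y=h(y)\delta_{\varphi(y)}$, and then obtain continuity of $\varphi$ and $h$ (and the homeomorphism property, via $T^{-1}$) by Urysohn-type arguments. The paper itself gives no proof of this statement, quoting it as a classical theorem, and your route matches the standard one it points to.
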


\begin{theorem}[{Banach-Stone Theorem for zero product preserving
maps; see \cite{Ab83,Ar83}, and also \cite{Ja90,FH94, JW96}}]\label{thm:BSTRI}
 Let $T: C_0(X) \rightarrow C_0(Y)$ be a bounded
linear operator. If $T$ is  zero product preserving, then there exist a bounded scalar function
$h$ on $Y$ and a
map $\varphi: Y\rightarrow X$ such that
$$
Tf(y) = h(y)f(\varphi(y)), \quad \forall\; f \in C_0(X),\;\forall\; y \in Y.
$$
Both $\varphi$ and $h$ are continuous on the  cozero set $\coz(h)
:=\{y\in Y: h(y)\neq 0\}$, which is open in $Y$.
If $T$ is bijective, then $h$ is  away from zero and $\varphi$ is a homeomorphism from $Y$ onto $X$.
\end{theorem}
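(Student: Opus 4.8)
The plan is to analyze $T$ one point of $Y$ at a time. Fix $y\in Y$ and consider the bounded linear functional $\mu_y:=\delta_y\circ T$ on $C_0(X)$, which by the Riesz representation theorem is a regular (complex) Borel measure on $X$. The zero product preserving hypothesis says precisely that whenever $f,g\in C_0(X)$ have disjoint cozero sets, $\mu_y(f)\mu_y(g)=(Tf)(y)(Tg)(y)=0$. I claim this forces the support of $\mu_y$ to be empty or a single point: if two distinct points $x_1\neq x_2$ lay in the support, one could pick $f_i$ supported in disjoint neighborhoods $U_i\ni x_i$ with $\mu_y(f_i)\neq0$, and then $f_1f_2=0$ while $\mu_y(f_1)\mu_y(f_2)\neq0$, a contradiction. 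A regular measure supported at one point is a multiple of a Dirac mass, so there are a scalar $h(y)$ and a point $\varphi(y)\in X$ with $\mu_y=h(y)\,\delta_{\varphi(y)}$ (when $\mu_y=0$ we set $h(y)=0$ and choose $\varphi(y)$ arbitrarily). This yields the representation $(Tf)(y)=h(y)f(\varphi(y))$, and since $|h(y)|=\|\mu_y\|\le\|T\|$, the weight $h$ is bounded.

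For the continuity statement I would fix $y_0\in\coz(h)$ and choose $f_0\in C_0(X)$ with $f_0(\varphi(y_0))=1$. Since $Tf_0$ is continuous and $(Tf_0)(y_0)=h(y_0)\neq0$, there is an open neighborhood $W$ of $y_0$ on which $Tf_0$---hence both $h$ and $f_0\circ\varphi$---never vanishes; in particular $W\subseteq\coz(h)$, so $\coz(h)$ is open. On $W$ the quotient
$$
\frac{(Tf)(y)}{(Tf_0)(y)}=\frac{f(\varphi(y))}{f_0(\varphi(y))}
$$
is continuous for every $f\in C_0(X)$. A net argument then gives continuity of $\varphi$ on $W$: if $\varphi(y_\alpha)$ failed to converge to $\varphi(y_0)$, one could choose $f$ with compact support in a small neighborhood of $\varphi(y_0)$ and $f(\varphi(y_0))=1$, making the left side above vanish along a subnet while its limit $f(\varphi(y_0))/f_0(\varphi(y_0))$ is nonzero. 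Once $\varphi$ is known to be continuous on $\coz(h)$, continuity of $h=(Tf_0)/(f_0\circ\varphi)$ on $W$ follows immediately.

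Finally, suppose $T$ is bijective. If $h(y_0)=0$ then every function in the range of $T$ vanishes at $y_0$, contradicting surjectivity; hence $h$ is nowhere zero and $\coz(h)=Y$, so $h$ and $\varphi$ are continuous on all of $Y$. The key observation is that $T^{-1}$ is again zero product preserving: if $g_1g_2=0$ and $f_i=T^{-1}g_i$, then $h^2\,(f_1f_2)\circ\varphi=g_1g_2=0$ and $h$ nonvanishing give $f_1f_2=0$ on $\varphi(Y)$; since injectivity of $T$ forces $\varphi(Y)$ to be dense, $f_1f_2=0$ throughout $X$. Applying the representation already proved to $T^{-1}$ produces a bounded nonvanishing weight $k$ and a map $\psi:X\to Y$ with $(T^{-1}g)(x)=k(x)g(\psi(x))$. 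Substituting $g=Tf$ and comparing both sides forces $\varphi\circ\psi=\mathrm{id}_X$, $\psi\circ\varphi=\mathrm{id}_Y$, and $k(\varphi(y))h(y)=1$; thus $\varphi$ is a homeomorphism with continuous inverse $\psi$, and $|h|\ge 1/\|k\|$ is bounded away from zero.

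I expect the main obstacle to be the continuity argument: $h$ and $\varphi$ are intertwined, so they cannot be treated separately, and the possibility that $\varphi(y_\alpha)$ escapes to infinity must be controlled, which is exactly why dividing by a fixed $f_0$ nonvanishing near $\varphi(y_0)$ is the right device. The single-point-support step is conceptually the heart of the matter but becomes clean once phrased via measures, and the bijective case reduces to re-applying the main representation to $T^{-1}$ after checking that it too preserves zero products.
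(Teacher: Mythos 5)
The paper does not actually prove Theorem \ref{thm:BSTRI}: it quotes it as a classical result, citing \cite{Ab83,Ar83,Ja90,FH94,JW96}, so there is no internal proof to compare against. Your argument is correct and is essentially the standard proof found in that cited literature: represent $\delta_y\circ T$ as a regular Borel measure, use zero product preservation to force its support to be empty or a singleton (hence a point mass), use the quotient-by-$Tf_0$ device to get openness of $\coz(h)$ and continuity of $\varphi$ and $h$ there, and handle the bijective case by showing $T^{-1}$ again preserves zero products and composing the two representations. The few steps you leave implicit are all standard and harmless: that a point in the support of a regular measure $\mu_y$ admits, in every neighborhood, some compactly supported $f$ with $\mu_y(f)\neq 0$ (inner regularity plus uniqueness in the Riesz theorem); that $T^{-1}$ is bounded, which is needed before re-applying the representation and follows from the open mapping theorem; and the one-line check that injectivity of $T$ forces $\varphi(Y)$ to be dense (otherwise a nonzero function supported off $\overline{\varphi(Y)}$ would lie in $\ker T$). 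None of these is a genuine gap, and your final identities $\varphi\circ\psi=\mathrm{id}_X$, $\psi\circ\varphi=\mathrm{id}_Y$, $k(\varphi(y))h(y)=1$ do yield both the homeomorphism claim and the bound $|h|\geq 1/\|k\|_\infty$, i.e., $h$ bounded away from zero.
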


The original form of Theorem \ref{thm:BSTRI} states for Riesz
isomorphisms (see, e.g., \cite[p. 172]{MN}). Note that  a linear
operator on Riesz spaces is a Riesz homomorphism if and only if it
is positive and disjointness preserving.  So the above form is
indeed an improvement.  In the following, we will see that  the disjointness
structure ($=$ zero product structure) alone gives rise to a rather rich theory.

The classical Banach-Stone theorems have been generalized in several
contexts and appeared in, e.g., the  vector-valued version, the lattice version, and the
C*-algebra version \cite{Ab83,Ar83,Ja90,FH94,JW96,EO,CCW}.
Going into a different direction
from \cite{BL05},
we will continue this line and obtain polynomial versions for C*-algebras in Sections \ref{s:zp} and \ref{s:isometry}.

%%%%%%%%%%%%%%%%%%%%%%%%%%%%%%%%%%%%%%%%%%%%%%%%%%%%%%%%%%%%%%%%%%%%%%%%%%%%%%%%%%%%%%%%

\section{Orthogonally additive and zero product preserving holomorphic maps} \label{s:zp}

Let $X$ and $Y$ be locally compact Hausdorff spaces.
Let   $P:C_0(X)\rightarrow C_0(Y)$ be a
bounded orthogonally additive
$n$-homogeneous polynomial.
By Theorem \ref{thm:PV}, there exists a
bounded linear operator $T:C_0(X)\rightarrow C_0(Y)$ such that
\begin{align}\label{eq:5}
P(f) = T(f^n), \quad \forall\; f \in C_0(X).
\end{align}

%\begin{definition}
In the following, we call a subset $F$  of $C_0(Y)$
        \begin{itemize}
            \item  \emph{separating points in $Y$} \emph{strongly } (resp.\ \emph{strictly}) if
for every pair of distinct  points $y_1,y_2$ in $Y$ there is an $f$ in $F$ such that
$|f(y_1)|\neq|f(y_2)|$ (resp.\ $f(y_1)\neq f(y_2)=0$).
            \item \emph{regular} if for any closed subset $Y_0$ of $Y$ and any point $y$
in $Y\setminus Y_0$ there is an $f$ in $F$ such that $f=0$ on $Y_0$ and $f(y)\neq0$.
    \end{itemize}
We say that an $n$-homogeneous polynomial $P: C_0(X)\to C_0(Y)$
\emph{has trivial positive kernel} if
$$
P(f) =0 \ \implies\  f=0, \quad\forall f\in C_0(X)_+.
$$
%\end{definition}

\begin{theorem}[Banach-Stone Theorem for zero product preserving polynomials]\label{thm:BSTPdp}
Let $P: C_0(X)\rightarrow
C_0(Y)$ be a bounded orthogonally additive  $n$-homogeneous polynomial.
Assume that $P$ is zero product preserving on positive elements, i.e.,
$$
fg=0 \quad\text{$\implies$}\quad   P(f)P(g) = 0, \quad \forall f,g \in C_0(X)_+.
$$
Then there exist a bounded scalar function $h$ on $Y$ and a map
$\varphi: Y\rightarrow X$ such that
$$
P(f)(y) = h(y)(f(\varphi(y)))^n, \qquad \forall f \in C_0(X), \;\forall\;  y \in Y.
$$
Here, both $h$ and $\varphi$ are  continuous wherever $h$ is nonvanishing.

If, in addition, $P$ has trivial positive kernel
and its range separates points in $Y$ strictly,
then $h$ is nonvanishing on $Y$ and $\varphi$ is a homeomorphism from $Y$ onto a dense subset of $X$.
\end{theorem}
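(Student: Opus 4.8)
The plan is to reduce everything to the linear Banach--Stone theorem for zero product preserving maps, Theorem \ref{thm:BSTRI}. First I would invoke Theorem \ref{thm:PV} to produce a bounded linear operator $T\colon C_0(X)\to C_0(Y)$ with $P(f)=T(f^n)$ for all $f$. Since $C_0(X)$ is commutative and $P$ is zero product preserving on $C_0(X)_+$, the implication (3)$\Rightarrow$(1) of Lemma \ref{lem:OM}, together with the computation carried out in its proof, shows that $T$ itself preserves zero products. Theorem \ref{thm:BSTRI} then yields a bounded scalar function $h$ on $Y$ and a map $\varphi\colon Y\to X$, both continuous on $\coz(h)$, with $T(g)(y)=h(y)\,g(\varphi(y))$; substituting $g=f^n$ gives $P(f)(y)=h(y)\,f(\varphi(y))^n$ and transfers the continuity statement verbatim. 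This settles the first assertion.

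For the second assertion I would first extract that $h$ is nonvanishing and $\varphi$ is injective from strict separation. Indeed, given $y_0\in Y$ and any $y_1\neq y_0$, strict separation of the range produces $f$ with $P(f)(y_0)\neq P(f)(y_1)=0$; in particular $P(f)(y_0)=h(y_0)f(\varphi(y_0))^n\neq 0$, forcing $h(y_0)\neq 0$ (the degenerate case $\#Y=1$ being handled directly by the positive-kernel condition). Thus $\coz(h)=Y$, so $\varphi$ is defined and continuous on all of $Y$. If $\varphi(y_1)=\varphi(y_2)$ with $y_1\neq y_2$, then $P(f)(y_1)$ and $P(f)(y_2)$ would vanish simultaneously for every $f$ (as $h$ is nonvanishing), contradicting strict separation; hence $\varphi$ is injective. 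Density of $\varphi(Y)$ in $X$ is exactly where trivial positive kernel enters: were $\varphi(Y)$ not dense, Urysohn's lemma would supply a nonzero $f\in C_0(X)_+$ supported in the open set $X\setminus\overline{\varphi(Y)}$, whence $P(f)(y)=h(y)f(\varphi(y))^n=0$ for all $y$, i.e.\ $P(f)=0$ with $f\geq 0$, $f\neq 0$, a contradiction.

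It remains to prove that $\varphi$ is a homeomorphism onto its dense image, and I expect the continuity of $\varphi^{-1}$ to be the main obstacle. The clean way to obtain it is to show that the range enjoys a regularity property: for a closed set $Y_0\subseteq Y$ and $y_0\notin Y_0$ there is $f$ with $P(f)=0$ on $Y_0$ and $P(f)(y_0)\neq 0$. Granting this, continuity of $\varphi^{-1}$ at $x_0=\varphi(y_0)$ is immediate: taking $Y_0=Y\setminus U$ for a prescribed neighborhood $U$ of $y_0$, the relation $P(f)|_{Y_0}=0$ forces $\varphi(Y_0)\subseteq\{f=0\}$ (since $h$ is nonvanishing), while $P(f)(y_0)\neq 0$ gives $f(x_0)\neq 0$; then $W:=\coz(f)$ is an open neighborhood of $x_0$ in $X$ with $\varphi^{-1}(W)\subseteq U$. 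Combined with the injectivity and continuity of $\varphi$ already established, this makes $\varphi$ a homeomorphism onto $\varphi(Y)$.

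The delicate point is thus to derive such a regularity property from the stated hypotheses. Strict separation only controls pairs of points and trivial positive kernel only controls density, so upgrading these to a statement about arbitrary closed sets is the real work; concretely, one must rule out the pathology in which $\varphi(y_0)$ is a limit of $\varphi(y_\alpha)$ for a net $(y_\alpha)$ leaving every compact subset of $Y$, along which $P(f)$ must decay even though $f(\varphi(y_\alpha))\to f(\varphi(y_0))\neq 0$. I would attack this by exploiting that for compactly supported $f\geq 0$ the cozero set $\coz(P(f))=\varphi^{-1}(\coz f)$ has $\varphi$-image inside $\operatorname{supp} f$, which gives a form of local properness of $\varphi$; the task is then to patch this local control into the global regularity above, and this is precisely the step I expect to require the full force of strict separation together with the trivial positive kernel.
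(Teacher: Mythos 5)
Your first two paragraphs are exactly the paper's proof: linearization via Theorem \ref{thm:PV}, the observation from the proof of Lemma \ref{lem:OM} that $T$ preserves zero products, an application of Theorem \ref{thm:BSTRI}, and then the paper's two claims (nonvanishing of $h$ and injectivity of $\varphi$ from strict separation, density of $\varphi(Y)$ from the trivial positive kernel). All of that is correct and matches the paper step for step.

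Where you stop short --- continuity of $\varphi^{-1}$ --- the paper's proof simply declares ``it is routine to see that $\varphi$ is a homeomorphism,'' and that sentence is in fact wrong: the pathology you isolate in your last paragraph (a net escaping every compact subset of $Y$ whose $\varphi$-image converges, the decay of $P(f)$ being absorbed by $h$) really occurs, so the missing step cannot be closed by any argument. Take $X=S^1$, $Y=(0,2\pi]$ and
$$
P(f)(t)=t\,f(e^{it})^{n},\qquad f\in C(S^1),\ t\in(0,2\pi].
$$
Then $P(f)=T(f^{n})$ with $T(g)(t)=t\,g(e^{it})$ bounded, linear and zero product preserving, so $P$ is a bounded orthogonally additive $n$-homogeneous polynomial preserving zero products (on all elements, in particular on positive ones); it has trivial positive kernel since $\{e^{it}:0<t\le 2\pi\}=S^1$; and its range separates points strictly (for $t_1\neq t_2$ choose $f$ with $f(e^{it_1})=1$, $f(e^{it_2})=0$, so that $P(f)(t_1)=t_1\neq 0=P(f)(t_2)$). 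The representation is unique, because a nonzero multiple of a point evaluation determines both the scalar and the point; hence necessarily $h(t)=t$ and $\varphi(t)=e^{it}$. This $\varphi$ is a continuous bijection of the noncompact space $(0,2\pi]$ onto the compact space $S^1$, hence not a homeomorphism onto its image: $\varphi(1/k)\to 1=\varphi(2\pi)$ while $1/k\not\to 2\pi$. So the final assertion of Theorem \ref{thm:BSTPdp} is false as stated; in particular the regularity property you hoped to extract is simply not available (the range of this $P$ is not regular), and an additional hypothesis --- compactness of $Y$, regularity of the range as in Lemma \ref{lem:bs-isom} and Theorem \ref{thm:BSTPiso}, or $h$ bounded away from zero --- is needed to make the homeomorphism conclusion true. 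Your decision to flag this step as the genuine difficulty, rather than wave it through as the paper does, was the right one.
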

\begin{proof}
%Assume $P:C_0(X)\rightarrow C_0(Y)$ is zero product preserving.
Let $T$ be the bounded linear map associated to $P$ as in \eqref{eq:5}.
As seen in the proof of Lemma \ref{lem:OM},
$T$ is   zero product preserving.
It follows from Theorem \ref{thm:BSTRI} that there exists a
bounded weight function $h$ on $Y$ continuous on its open cozero set
$Y_1=\{y\in Y: h(y)\neq 0\}$, and
a continuous map $\varphi: Y_1\to X$ such that
\begin{align}\label{eq:8}
P(f)(y) = \left\{
            \begin{array}{ll}
              h(y)f(\varphi(y))^n, & \forall f\in C_0(X), \forall y\in Y_1,\\
              0, & \text{ on } Y\setminus Y_1.
            \end{array}
          \right.
\end{align}

Without further assumptions, we might define $\varphi$ arbitrary on $Y\setminus Y_1$.
This finishes the first assertion of the theorem.

\textbf{Claim 1}.  If $P$ has trivial positive kernel, then $\varphi(Y_1)$ is dense in $X$.
%Consequently, $P$ is injective.

For else, there would be a nonempty open set $U$ in $X$
disjoint from   $\varphi(Y_1)$.
Let $f$ be nonzero in $C_0(X)_+$ vanishing outside $U$.
Then $P(f)=0$ implies $f=0$, a contradiction.

\textbf{Claim 2}.  If $P$ separates points in $Y$
strictly, then $Y=Y_1$ and $\varphi$ is a homeomorphism  from $Y$ onto $\varphi(Y)$.

Since for every $y$ in $Y$, there is an $f$ in $C_0(X)$ such that $P(f)(y)\neq 0$,
we see that $Y=Y_1$ on which $h$ is nonvanishing.  Hence
$$
P(f)(y) = h(y)f(\varphi(y))^n, \quad \forall f\in C_0(X), \forall y\in Y.
$$
Moreover, for any distinct points $y_1,y_2$ in $Y$ the strong separation property of
$P$ ensures again that $\varphi(y_1)\neq \varphi(y_2)$.
Thus, $\varphi$ is one-to-one.
Finally, it is routine to see that  $\varphi$ is a homeomorphism from $Y$ onto $\varphi(X)$.
\end{proof}

\begin{example}
We remark that $\varphi(Y)$ can be a proper dense subset of $X$ in Theorem \ref{thm:BSTPdp}.
For example, consider the  map $P:C[0,1]\to C_0(0,1]$ defined by $P(f)(t)=tf(t)$.
On the other end, the weight function $h$ might not be  continuous on the whole $Y$.
The  map $P:C_0(0,1]\to C[0,1]$ defined by $P(f)(t)=\sin(1/t)f(t)$ on $Y_1=(0,1]$ verifies this fact.
Moreover, the strong separation assumption on the range of $P$ cannot be weakened to the usual one.
Consider $P:C[0,1] \to C([0,1]\cup[2,3])$ defined by $P(f)\mid_{[0,1]}(t)= f(t)$ and $P(f)\mid_{[2,3]}(t) = f(t-2)/2$.
It is obvious that the range of $P$ separates points in $Y=[0,1]\cup[2,3]$ (but not strictly), and $\varphi(t)=\varphi(2+t)$ for
all $t$ in $[0,1]$.
\end{example}

\begin{theorem}%[Banach-Stone Theorem for zero product preserving holomorphic functions]
\label{thm:BSTPdphf}
Let $H: B_{C_0(X)}(0;r)\to C_0(Y)$ be a
bounded orthogonally additive and zero product preserving holomorphic function.
Then there exist a  sequence $\{h_n\}$ of bounded scalar  functions on $Y$ in which each $h_n$ is continuous on its cozero set, which is open,
and a map
$\varphi: Y\rightarrow X$ such that
$$
H(f)(y) = \sum_{n=1}^\infty h_n(y)(f(\varphi(y)))^n, \quad\forall y\in Y,
$$
uniformly for all $f $ in $B_{C_0(X)}(0;r)$.
Here,  $\varphi$  is continuous  wherever some $h_n$ is nonvanishing.
\end{theorem}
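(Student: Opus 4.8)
The plan is to reduce to the polynomial case already settled in Theorem \ref{thm:BSTPdp} by passing to the Taylor expansion of $H$, and then to glue the symbol maps produced in each degree into a single map $\varphi$. First I would write the Taylor series $H=\sum_{n=0}^\infty P_n$ at zero, which converges uniformly on $B_{C_0(X)}(0;r)$. By Proposition \ref{prop}, each $P_n$ is a bounded orthogonally additive $n$-homogeneous polynomial that is zero product preserving on positive elements, and $P_0=0$. Applying Theorem \ref{thm:BSTPdp} to each $P_n$ with $n\geq 1$ furnishes a bounded scalar function $h_n$ on $Y$, continuous on its open cozero set $Y_n:=\coz(h_n)$, together with a map $\varphi_n:Y\to X$ continuous on $Y_n$, such that
\[
P_n(f)(y) = h_n(y)\, f(\varphi_n(y))^n, \quad \forall f\in C_0(X),\ \forall y\in Y,
\]
where both sides vanish off $Y_n$.

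The crux, and the step I expect to be the main obstacle, is amalgamating the a priori different maps $\varphi_n$ into one $\varphi$. For this I would exploit the stronger cross-product vanishing recorded in Proposition \ref{prop}(b): $fg=0$ implies $P_m(f)P_n(g)=0$ for all indices $m,n$ and all $f,g\in C_0(X)_+$, not merely the diagonal statement for each individual $P_n$. Fix $y\in Y_m\cap Y_n$ and suppose, toward a contradiction, that $\varphi_m(y)\neq\varphi_n(y)$. Since $X$ is locally compact Hausdorff, I can choose $f,g\in C_0(X)_+$ with disjoint supports and $f(\varphi_m(y))=g(\varphi_n(y))=1$; then $fg=0$, yet $P_m(f)(y)\,P_n(g)(y)=h_m(y)h_n(y)\neq 0$ because $y\in Y_m\cap Y_n$, contradicting Proposition \ref{prop}(b). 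Hence the $\varphi_n$ agree on all overlaps $Y_m\cap Y_n$, so I may define $\varphi$ to be their common value on $\bigcup_n Y_n$ and arbitrarily elsewhere.

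With $\varphi$ so defined, for each $n$ the identity $P_n(f)(y)=h_n(y)\,f(\varphi(y))^n$ holds for every $y\in Y$: on $Y_n$ it is the displayed formula with $\varphi=\varphi_n$, while off $Y_n$ the factor $h_n(y)$ forces both sides to vanish regardless of the value of $\varphi(y)$. Summing over $n$ and invoking the uniform convergence of the Taylor series then yields
\[
H(f)(y) = \sum_{n=1}^\infty h_n(y)\, f(\varphi(y))^n, \quad \forall y\in Y,
\]
uniformly for $f\in B_{C_0(X)}(0;r)$. Finally, wherever some $h_n$ is nonvanishing the map $\varphi$ coincides with the corresponding $\varphi_n$, which is continuous on $Y_n$, so $\varphi$ is continuous at such points, completing the argument. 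The entire difficulty is concentrated in the gluing step, and what makes it go through is the availability of the cross-term vanishing $P_m(f)P_n(g)=0$ between polynomials of different degrees rather than only the zero product preservation of each $P_n$ separately.
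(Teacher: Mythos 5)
Your proposal is correct and follows essentially the same route as the paper's own proof: expand $H$ into its Taylor series, apply Proposition \ref{prop} and Theorem \ref{thm:BSTPdp} degreewise, and glue the symbol maps $\varphi_n$ using the cross-term vanishing $P_m(f)P_n(g)=0$ from Proposition \ref{prop}(b), exactly as the paper does. No gaps; the identification of the gluing step as the crux, and its resolution via the mixed-degree zero product condition, matches the paper's argument precisely.
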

\begin{proof}%[{Proof of Theorem \ref{thm:BSTPdphf}}]
The holomorphic function $H: B_{C_0(X)}(0;r)\to C_0(Y)$ has its Taylor series $\sum_{n\geq 0} P_n$ at zero.
By Proposition \ref{prop},
$P_n: C_0(X)\to C_0(Y)$ is a bounded orthogonally additive
and zero product preserving $n$-homogeneous polynomials for every $n \geq 1$, and $P_0 = 0$.
It follows from Theorem \ref{thm:BSTPdp}  that there exist for each $n=1,2,\ldots$, a
 bounded
scalar function $h_n$ on $Y$  continuous on its cozero set
$\coz(h_n):=\{y\in Y: h_n(y)\neq 0\}$, which is open, and a
map $\varphi_n: Y\to X$ continuous on
$\coz(h_n)$ such that
$$
P_n(f)(y) = h_n(y)f(\varphi_n(y))^n, \quad\forall\, f\in C_0(X), \forall\, y\in Y, \forall\, n =1,2,\ldots.
$$

For any two positive integers $m\neq n$,  we claim that
$$
\varphi_m(y) = \varphi_n(y), \quad\forall y\in \coz(h_m)\cap\coz(h_n).
$$
Suppose $y\in Y$ such that $h_m(y)h_n(y)\neq 0$ and $x_m=\varphi_m(y)\neq x_n = \varphi_n(y)$.
Let $f,g\in C_0(X)_+$ such that  $fg=0$, and $f(x_m) = g(x_n)=1$.
By Proposition \ref{prop}(b), we see that
$$
0=P_m(f)(y)P_n(g)(y)=h_m(y)h_n(y).
$$
This contradiction shows that $\varphi_m, \varphi_n$ agree on $\coz(h_m)\cap\coz(h_n)$.
Therefore, we can define a map $\varphi: Y\to X$ by set-theoretical union, which agrees with $\varphi_n$
and is continuous on $\coz(h_n)$ for $n=1,2,\ldots$.
\end{proof}

%%%%%%%%%%%%%%%%%%%%%%%%%%%%%%%%%%%%%%%%%%%%%%%%%%%%%%%%%%%%%%%%%%%%%%%%%%%%%%%%%%%%%%%%

%\section{Orthogonally additive holomorphic maps on general C*-algebras} \label{s:general}

To obtain the non-commutative version of Theorem \ref{thm:BSTPdphf}, we need the following counterpart of Theorem \ref{thm:BSTPdp}.

\begin{theorem}\label{thm:C-dp}
Let $A, B$ be  C*-algebras.  Let $P:A\to B$ be a bounded orthogonally additive $n$-homogeneous polynomial.
Let $T: A\to B$ be the bounded linear operator such that $P(a)=T(a^n), \forall a\in A$.  Let
$h:= T^{**}(1)$, where $T^{**}$ is the bidual map of $T$.
Suppose that
$$
ab = 0 \quad\implies\quad P(a)P(b)=0,\quad \forall a,b \in A_+.
$$
\begin{enumerate}[(a)]
    \item If  $h$ is invertible, then there is a Jordan homomorphism $J: A\to B$
 such that
$$
P(a)=hJ(a)^n=J(a)^n h, \quad \forall a\in A.
$$
    \item If  $B=\operatorname{span} P(A)$, the linear span of the range of $P$, then $h$ is a central invertible multiplier of $B$ and $J$ in (a) is  surjective.
\end{enumerate}
In both cases, $J$ is injective if and only if $P$ has trivial positive kernel.
\end{theorem}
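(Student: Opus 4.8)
The plan is to transfer the hypothesis from $P$ to its linearizing operator $T$, recognize $T$ as a weighted Jordan homomorphism, and then read off the three assertions. First I would record that by Theorem \ref{thm:C-form} we have $P(a)=T(a^n)$, and that the zero-product hypothesis on $P$ forces $T$ to annihilate products of orthogonal positives: if $a,b\in A_+$ with $ab=0$, then $a^{1/n}b^{1/n}=0$ by functional calculus, so $T(a)T(b)=P(a^{1/n})P(b^{1/n})=0$, and symmetrically $T(b)T(a)=0$. Passing to the bidual, $T^{**}\colon A^{**}\to B^{**}$ is weak$^*$-continuous, and since finite real-linear combinations of projections are weak$^*$-dense in $A^{**}_{sa}$ while multiplication is separately weak$^*$-continuous, the same annihilation persists: $T^{**}(p)T^{**}(q)=T^{**}(q)T^{**}(p)=0$ for orthogonal projections $p,q\in A^{**}$.

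Next I would extract the Jordan structure from the projection lattice. Writing $h=T^{**}(1)$ and applying the previous relation to the orthogonal pair $p,\,1-p$ yields the identities
\[
T^{**}(p)\,h = h\,T^{**}(p) = T^{**}(p)^2,\qquad p=p^2=p^*\in A^{**}.
\]
Thus $h$ commutes with $T^{**}(p)$ for every projection, hence with all of $T^{**}(A^{**})$. Assuming $h$ invertible (this is hypothesis (a), and is to be proved in case (b)), set $S:=h^{-1}T^{**}$, so that $S(1)=1$. The identities give $S(p)^2=S(p)$ and $S(p)S(q)=S(q)S(p)=0$ for orthogonal $p,q$; feeding in the spectral resolution of a self-adjoint element and using weak$^*$-continuity then gives $S(x^2)=S(x)^2$ on $A^{**}_{sa}$, so $S$ is a unital Jordan homomorphism. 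Matching this self-adjoint, one-sided zero-product data against the structure theory of orthogonality preservers (\cite{BFGP09}) upgrades $S$ to a Jordan $^*$-homomorphism, so that $S$ carries $A_+$ into $B^{**}_+$; I would then set $J:=S|_A$.

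With $J$ in hand the three claims follow formally. Jordan homomorphisms preserve powers, so $S(a^n)=S(a)^n$ and
\[
P(a)=T(a^n)=h\,S(a^n)=h\,S(a)^n=S(a)^n\,h,
\]
which is assertion (a) once the centrality of $h$ is used to give $hJ(a)^n=J(a)^n h$. For (b), the map $a\mapsto a^n$ is a bijection of $A_+$, whence $\operatorname{span}P(A)=h\cdot\operatorname{span}S(A)$; the equality $B=\operatorname{span}P(A)$ then pins $h$ down as a central invertible multiplier of $B$ and forces $S(A)=J(A)=B$, i.e.\ $J$ is surjective. Finally, since $h$ is invertible, for $a\in A_+$ one has $P(a)=0\iff S(a)^n=0\iff S(a)=0$, because $S(a)\in B^{**}_+$; hence $P$ has trivial positive kernel exactly when $J$ is injective on $A_+$, which (via the spectral argument above, now using $^*$-preservation to split $A=A_{sa}\oplus iA_{sa}$) is equivalent to injectivity of $J$.

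The main obstacle is the structural step of the second paragraph: promoting the bare annihilation $T(a)T(b)=T(b)T(a)=0$ on the positive cone to a genuine weighted Jordan $^*$-homomorphism. The delicate point is that the hypothesis supplies only one-sided, adjoint-free zero products among self-adjoint elements, which is formally weaker than the two-sided $^*$-orthogonality $T(a)^*T(b)=T(a)T(b)^*=0$ demanded by the orthogonality-preserver machinery; reconciling the two---equivalently, proving that the idempotents $S(p)$ are in fact projections so that $S$ preserves adjoints---is where the real content lies, and is also what makes both the injectivity equivalence and the multiplier statement in (b) go through. The remaining verifications are then routine.
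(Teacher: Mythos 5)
Your overall shape is right---linearize $P$ to $T$ via Theorem \ref{thm:C-form}, show $T$ kills zero products of positives, and exhibit $J=h^{-1}T$---and this matches the paper's strategy. But two of the three hard steps fail as written. First, the passage to the bidual is not justified: from $T(a)T(b)=T(b)T(a)=0$ for orthogonal $a,b\in A_+$ you cannot conclude $T^{**}(p)T^{**}(q)=0$ for orthogonal projections $p,q\in A^{**}$ by ``weak$^*$-density of spans of projections plus separate weak$^*$-continuity.'' Density of projection combinations lets you propagate an identity \emph{already established on projections} to all of $A^{**}_{sa}$; to establish it \emph{on} projections you would need to approximate $p$ and $q$ jointly by nets in $A_+$ that stay mutually orthogonal, and general (non-open) projections of $A^{**}$ admit no such approximation. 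This is exactly the technical content the paper outsources: Lemma 4.5 of \cite{CKLW03} and Lemma 2.3 of \cite{W07} give the identity $T^{**}(1)T(a^2)=T(a^2)T^{**}(1)=T(a)^2$ on $A$ directly, by a partition-of-the-spectrum argument inside commutative C$^*$-subalgebras (compare the paper's own proof of Lemma \ref{lem:OMH-alg}); part (a) is then immediate. Likewise in (b) your argument is circular---$S=h^{-1}T^{**}$ exists only after $h$ is known to be invertible---and the assertion that $B=\operatorname{span}P(A)$ ``pins $h$ down as a central invertible multiplier'' is precisely the content of Theorem 2.4 of \cite{W07} (Theorem 4.12 of \cite{CKLW03} in the unital case), a substantive theorem about surjective zero-product preservers; the paper proves (b) by noting $T(A)=\operatorname{span}P(A)=B$ and quoting that theorem.

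Second, the step you flag as ``where the real content lies''---upgrading $S$ to a Jordan $^*$-homomorphism via the orthogonality-preserver machinery of \cite{BFGP09}---is not a gap to be filled but a false statement. Take $A=B=M_2$, fix an invertible non-unitary $s$ (say $s=\operatorname{diag}(1,2)$), and set $P(a)=(s^{-1}as)^n$, i.e.\ $T(x)=s^{-1}xs$. All hypotheses hold (indeed $P$ preserves zero products on all of $A$, since $ab=0$ implies $a^nb^n=a^{n-1}(ab)b^{n-1}=0$), and $h=1$; yet $J(a)=s^{-1}as$ is not $^*$-preserving, and the idempotents $S(p)=s^{-1}ps$ are not projections. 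Consequently your injectivity argument, which relies on $S(a)\in B^{**}_+$ to deduce $S(a)^n=0\Rightarrow S(a)=0$, collapses. The paper's route avoids positivity of $J$ altogether: by \cite{Civin65} the kernel of the Jordan homomorphism $J$ is a closed two-sided ideal, hence generated by its positive elements and closed under taking $n$-th roots of its positive elements; so for $a\geq 0$, $P(a)=hJ(a)^n=hJ(a^n)=0$ forces $a^n\in\ker J$ and then $a\in\ker J$, which yields the equivalence between injectivity of $J$ and triviality of the positive kernel of $P$ with no $^*$-preservation needed.
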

\begin{proof}
By Theorem \ref{thm:C-form}, we have a bounded linear operator $T:A\to B$
such that $P(a)=T(a^n)$, $\forall a\in A$.
By functional calculus,  for every $x$ in $A_{sa}$ there is a self-adjoint element $y$ in
$C^*(x)$, the C*-subalgebra of $A$ generated by $x$, such that ${y}^n=x$.
It is then easy to see that $T$ sends zero  products in $A_{sa}$ to zero products in $B$.
It follows from \cite[Lemma 4.5]{CKLW03} and \cite[Lemma 2.3]{W07} that $T^{**}(1)T(a^2) = T(a^2)T^{**}(1) =(T(a))^2$ for all $a$ in $A$.
If $h=T^{**}(1)$ is invertible,  we have the asserted Jordan homomorphism $J=h^{-1}T$.

Now, suppose $B=\operatorname{span} P(A)$ instead.
In this case, every element $b$ in $B$ can be written as a linear sum
$b=\sum_j \alpha_j P(b_j) = \sum_j \alpha_j T(b_j^n)=T(\sum_j \alpha_j b_j^n)$.
Therefore, $T$ is surjective.
Moreover, $T$ sends zero products in $A_{sa}$ to zero products in $B$.  By \cite[Theorem 2.4]{W07} (where $A$
can be non-unital; see also \cite[Theorem 4.12]{CKLW03} for the unital case),
we see that $h$ is an invertible central multiplier of $B$, and
$J=h^{-1}T$ is a bounded surjective Jordan homomorphism from $A$ onto $B$.

Finally, the kernel of $J$ is a closed two-sided ideal of $A$ (\cite{Civin65}).
In particular, it is generated by its positive elements.
For any positive element $a$ in $A$,
we have $P(a)=0$ if and only if $J(a)=0$.  Hence, $J$ is injective if and only if $P$ has trivial positive kernel.
\end{proof}

The following lemma might be known, although we have not found a reference from the literature.
We thank Lawrence G. Brown for telling us the following proof.

\begin{lemma}\label{lem:larry}
Let $A$ be a non-commutative C*-algebra.  Then there exist $a,b$ in $A$ such that
$ab=0$ but $b^n a^n\neq 0$ for $n=1,2,3,\ldots$.
\end{lemma}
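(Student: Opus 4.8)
The plan is to reduce the statement to the existence of a single nonzero square-zero element of $A$, and then to build $a,b$ from it by an explicit formula. Suppose we have found $z\in A$ with $z\neq0$ and $z^2=0$; write $s=z^*z$, $t=zz^*$, and set $a=z^*z$, $b=zz^*+z$. From $z^2=0$ one reads off the identities $st=z^*(zz)z^*=0$, $ts=z(z^*z^*)z=0$, $sz=z^*(zz)=0$, $zt=(zz)z^*=0$, and $zs=(zz^*)z=tz$. The first two give $ab=s(t+z)=st+sz=0$ immediately, and a one-line induction using $zt=z^2=0$ yields $b^n=t^n+t^{n-1}z$ for all $n\ge1$. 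Multiplying on the right by $a^n=s^n$, using $t^ns^n=t^{n-1}(ts)s^{n-1}=0$ and $zs^n=t^nz$ (iterating $zs=tz$), I get $b^na^n=t^{2n-1}z=(zz^*)^{2n-1}z$. Finally $(b^na^n)(b^na^n)^*=t^{2n-1}(zz^*)t^{2n-1}=t^{4n-1}$, so $\|b^na^n\|^2=\|t\|^{4n-1}=\|z\|^{2(4n-1)}>0$ and hence $b^na^n\neq0$ for every $n$. This half is just C*-algebra bookkeeping.

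The substance of the proof is therefore to manufacture a nonzero $z\in A$ with $z^2=0$, and this is where the main obstacle lies: spectral projections produce such an element effortlessly, but only inside $A^{**}$, and approximating them by elements of $A$ destroys the exact relation $z^2=0$. To keep everything inside $A$ with an exact square-zero relation, I would argue as follows. Since $A$ is non-commutative, decomposing two non-commuting elements into real and imaginary parts furnishes self-adjoint $h,k\in A$ with $hk\neq kh$. As $k$ fails to commute with $h$, it cannot commute with every spectral projection of $h$; consequently there are disjoint closed subsets $S_1,S_2\subseteq\sigma(h)$, at a positive distance from one another, with $\chi_{S_1}(h)\,k\,\chi_{S_2}(h)\neq0$. (If all such off-diagonal products vanished, approximating an arbitrary Borel set and its complement from inside by closed sets separated by a gap would force $k$ to commute with every $\chi_S(h)$, hence with $h$.) I then choose continuous $f,u\colon\sigma(h)\to[0,1]$ supported in disjoint neighbourhoods of $S_1,S_2$, with $f\equiv1$ on $S_1$ and $u\equiv1$ on $S_2$, so that $fu=0$ and $u(h)f(h)=0$, while $f(h)\,k\,u(h)$ approximates $\chi_{S_1}(h)\,k\,\chi_{S_2}(h)$ in the strong operator topology of $A^{**}$. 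For tight enough neighbourhoods this element is nonzero, and setting $z=f(h)\,k\,u(h)$ gives $z^2=f(h)\,k\,\big(u(h)f(h)\big)\,k\,u(h)=0$, as wanted.

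The one delicate point to record is membership in $A$: when $A$ is non-unital, $f(h)$ and $u(h)$ themselves need not lie in $A$ (their symbols may not vanish at $0$), but $z=f(h)\,k\,u(h)$ always does, because the factor $k\in A$ is sandwiched between them. Indeed, writing $f(h)=f(0)1+(f(h)-f(0)1)$ and $u(h)=u(0)1+(u(h)-u(0)1)$ shows $f(h)k\in A$ and $k\,u(h)\in A$, hence $z\in A$. This is what lets me dispense with any requirement that $0\notin S_1\cup S_2$, so the argument is uniform: even the degenerate case in which $h$ is essentially a projection $e$ (where $\sigma(h)$ leaves little room away from $0$) is covered by the same formula, giving the square-zero element $z=f(h)\,k\,u(h)=eke'=ek-eke$ with $e'=1-e$. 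With the square-zero element $z$ in hand, the explicit pair $a=z^*z$, $b=zz^*+z$ of the first paragraph completes the proof.
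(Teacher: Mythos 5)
Your proof is correct, and it takes a genuinely different route from the paper's on both halves of the argument. For the existence of a square-zero element, the paper does not construct one: it simply cites Kaplansky's theorem \cite[page 292]{KR} for a norm-one $x$ with $x^2=0$, whereas you reprove this fact from scratch (non-commuting self-adjoints $h,k$, inner regularity of the spectral measure of $h$ in $A^{**}$, Urysohn functions with $fu=0$, and the sandwich $z=f(h)\,k\,u(h)$ to handle the non-unital case). That part of your argument is sound, though the ``tight enough neighbourhoods'' SOT-limit step is unnecessary: since $f\equiv1$ on $S_1$ and $u\equiv1$ on $S_2$, one has $\chi_{S_1}(h)\,z\,\chi_{S_2}(h)=\chi_{S_1}(h)\,k\,\chi_{S_2}(h)\neq0$, so $z\neq0$ for any admissible $f,u$. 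The bigger divergence is in verifying $b^na^n\neq0$. The paper takes $a=|x^*|$, $b=x^*+|x|$ and argues representation-theoretically: it extends a character of the commutative subalgebra generated by $|x|$ and $|x^*|$ to a pure state, passes to the GNS representation, and restricts to a two-dimensional invariant subspace on which $\pi(a)$ and $\pi(b)$ act as idempotent $2\times2$ matrices whose product in reverse order is nonzero, so that $\pi(b)^n\pi(a)^n=\pi(b)\pi(a)\neq0$. You instead take $a=z^*z$, $b=zz^*+z$ (essentially the paper's pair with $|x|$, $|x^*|$ replaced by their squares, $z=x^*$), which makes the verification purely algebraic: your identities $st=ts=sz=zt=0$, $zs=tz$, the induction $b^n=t^n+t^{n-1}z$, and $zs^n=t^nz$ all check out, giving $b^na^n=(zz^*)^{2n-1}z$ and, by the C*-identity, $\|b^na^n\|=\|z\|^{4n-1}>0$. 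The trade-off: the paper's proof is short given the citation, and its GNS reduction to $2\times 2$ idempotents is a reusable structural trick; yours is fully self-contained and elementary --- no pure states or representations --- and yields an exact norm formula rather than mere nonvanishing, at the cost of reproving Kaplansky's lemma.
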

\begin{proof}
By Kaplansky's
theorem \cite[page 292]{KR} there is a norm one element $x$ in $A$ with
$x^2 = 0$.  Let $h= |x|$ and $k = |x^*|$.
Consider the left support projection $p=\lim_n k^{1/n}$
and the right support projection $q=\lim_n h^{1/n}$ of $x$.  Then $p,q$
are open projections of $A$ such that $pq=0$ and $x= pxq$.
Moreover, $1$ is
in the spectrum of the positive norm one element $k$.
Let $a = k$ and $b = x^* + h$.  Then $ab=0$.

We verify that $b^n a^n\neq0$.
Let $B$ be the commutative C*-subalgebra of $A$ generated by the orthogonal positive elements $h$ and $k$.
Indeed,  $B$ consists of elements
$f(k) + g(h)$, where $f$ and $g$ are continuous functions vanishing at $0$.  Note that
every complex homomorphism of $B$ extends to a pure state of $A$.  Thus there is
a pure state $\phi$ such that $\phi(f(k)) = f(1)$, and $\phi(g(h)) = 0$.  Consider
the GNS representation $(\pi,\cH,v)$ for $\phi$, where $v$ is the state vector.  Thus $\pi(f(k))v
= f(1)v$, and $\pi(g(h))v = 0$.  Let $w = \pi(x)^*v$.  Then $v$ and $w$ form an orthonormal
basis for a two-dimensional subspace of the Hilbert space $\cH$ which is
invariant under both $\pi(x)$ and $\pi(x)^*$.  The matrix representations of the
restrictions of $\pi(a)$ and $\pi(b)$ to this subspace can be written as two
idempotent $2 \times 2$
matrices
$$
\left(
  \begin{array}{cc}
    1 & 0 \\
    0 & 0 \\
  \end{array}
\right)
\quad\text{and}\quad
\left(
  \begin{array}{cc}
    0 & 0 \\
    1 & 1 \\
  \end{array}
\right).
$$
Their product is $0$ in one order but non-zero in the other order.
In particular, $\pi(b^n a^n)= \pi(b)^n\pi(a)^n = \pi(b)\pi(a)\neq 0$.  Thus
$b^n a^n\neq 0$ for $n=1,2,3,\ldots$.
\end{proof}

\begin{theorem}\label{thm:C-dp-type1}
Let $A, B$ be  C*-algebras.
Let $P:A\to B$ be a bounded orthogonally additive $n$-homogeneous polynomial.
Suppose that  $B=\operatorname{span} P(A)$, and
$$
ab = 0 \quad\implies\quad P(a)P(b)=0,\quad \forall a,b \in A.
$$
Then there is a central invertible multiplier $h$ of $B$ and a bounded surjective
algebra homomorphism $J$ from $A$ onto $B$ such that
$$
P(a)=hJ(a)^n, \quad \forall a\in A.
$$
Moreover, $J$ is an algebra isomorphism if and only if $P$ has trivial positive kernel.
\end{theorem}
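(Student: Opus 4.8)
The plan is to invoke the positive-element theorem already in hand and then promote the Jordan homomorphism it produces to an honest algebra homomorphism, the extra strength coming from the fact that zero products are now preserved on all of $A$. Since the present hypothesis holds a fortiori on $A_+$ and $B=\operatorname{span}P(A)$, Theorem \ref{thm:C-dp}(b) applies verbatim: writing $P(a)=T(a^n)$ and $h=T^{**}(1)$, we obtain that $h$ is a central invertible multiplier of $B$ and that $J:=h^{-1}T$ is a bounded surjective Jordan homomorphism with $P(a)=hJ(a)^n$. The closing ``moreover'' is then free, because the last line of Theorem \ref{thm:C-dp} records that $J$ is injective exactly when $P$ has trivial positive kernel, and a surjective injective algebra homomorphism is an isomorphism. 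Thus the entire problem collapses to showing that the Jordan homomorphism $J$ is multiplicative.

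To test multiplicativity I would split $J$ by its Jordan structure. Extending to biduals, $J^{**}$ is a weak$^*$-continuous surjective Jordan homomorphism of von Neumann algebras, so the classical structure theory furnishes a central projection $e\in B^{**}$ for which $\phi\colon a\mapsto eJ(a)$ is a homomorphism and $\psi\colon a\mapsto(1-e)J(a)$ is an anti-homomorphism with range orthogonal to that of $\phi$. A direct computation gives $J(xy)-J(x)J(y)=\psi(xy)-\psi(yx)=-[\psi(x),\psi(y)]$, so $J$ is multiplicative if and only if the range of $\psi$ is commutative. Meanwhile the hypothesis feeds in as follows: since $h$ is central and invertible, $ab=0$ forces $J(a)^nJ(b)^n=0$; expanding along the orthogonal decomposition, using that $\phi$ is multiplicative, $\psi$ anti-multiplicative, and $a^nb^n=a^{n-1}(ab)b^{n-1}=0$, this reduces to $\psi(b^na^n)=0$. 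Hence
$$
ab=0\quad\Longrightarrow\quad \psi(b^na^n)=0,\qquad\forall\,a,b\in A.
$$

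It remains to rule out a non-commutative range for $\psi$, and this is exactly where Lemma \ref{lem:larry} enters. Let $I=\ker\psi$, a closed two-sided ideal, so that $\psi$ factors through an injective anti-homomorphism of $A/I$; were the range of $\psi$ non-commutative, $A/I$ would be non-commutative as well. By Kaplansky's theorem the quotient would then carry a norm-one square-zero element $\bar x$, and I would lift it to a square-zero contraction $x\in A$ with $x^2=0$ and image $\bar x$. Running the construction of Lemma \ref{lem:larry} on $x$, namely $a=|x^*|$ and $b=x^*+|x|$, produces $a,b\in A$ with $ab=0$, whence $\psi(b^na^n)=0$, i.e. $b^na^n\in I$; but since functional calculus commutes with the quotient map, the images of $a,b$ in $A/I$ are precisely the elements produced by Lemma \ref{lem:larry} from $\bar x$, so $\overline{b^na^n}\neq0$, a contradiction. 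Therefore $\psi$ has commutative range, $J$ is multiplicative, and the theorem follows.

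The main obstacle is the lifting step: transporting the square-zero witness of non-commutativity from the quotient $A/I$ back to an honest square-zero element of $A$ --- equivalently, realizing the non-commutativity of the anti-homomorphic part by a genuine zero-product pair in $A$ rather than merely in $A/I$. This rests on the (known) projectivity of the relation $x^2=0$, $\|x\|\le 1$, so that square-zero contractions lift along quotient $*$-homomorphisms; without such a lifting one only controls those pairs already arising from zero products in $A$, which is not enough to invoke Lemma \ref{lem:larry} inside the quotient.
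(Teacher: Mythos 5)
Your proposal reproduces the paper's skeleton: invoke Theorem \ref{thm:C-dp}(b) to obtain the central invertible $h$ and the surjective Jordan homomorphism $J$ (the ``moreover'' clause is read off the same way in both), reduce multiplicativity of $J$ to commutativity of its anti-multiplicative part, and kill non-commutativity by combining Lemma \ref{lem:larry} with a lifting argument and the hypothesis in the form $ab=0\Rightarrow h^2J(a)^nJ(b)^n=0$. The differences lie in the two supporting tools. For the splitting, the paper applies Bre\v{s}ar's theorem \cite{bresar89} directly to $J\colon A\to B$, obtaining only an essential-ideal decomposition (multiplicative on one ideal, anti-multiplicative on another), and consequently needs a closing density argument over an essential ideal; you pass to $J^{**}$ and claim a clean central-projection splitting $J=\phi+\psi$ inside $B^{**}$, which yields the transparent identity $J(xy)-J(x)J(y)=-[\psi(x),\psi(y)]$ and dispenses with that closing step. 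For the lifting, the paper lifts the zero-product pair coming from Lemma \ref{lem:larry} through the quotient using Akemann--Pedersen \cite{akemann}; you instead lift the square-zero contraction itself, via projectivity of the relation $x^2=0$, $\|x\|\le1$ (projectivity of the cone over $M_2$, due to Loring), and rerun the lemma's construction upstairs. Your computations along the way ($a^nb^n=0$, the reduction to $\psi(b^na^n)=0$, compatibility of functional calculus with the quotient map, the norm-one condition surviving the lift) are all correct.

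The step you should not treat as free is the bidual splitting. Kadison's and St{\o}rmer's decomposition theorems are stated for Jordan $*$-homomorphisms, and the $J$ coming out of Theorem \ref{thm:C-dp} is $h^{-1}T$, with no reason to be $*$-preserving; so ``classical structure theory'' does not apply off the shelf. The statement you need --- a normal Jordan epimorphism between von Neumann algebras splits along a central projection into a homomorphism plus an anti-homomorphism --- is true, but the natural proof quotients out the kernel (a weak$^*$-closed two-sided ideal, by Civin--Yood \cite{Civin65}), applies Bre\v{s}ar's theorem to the resulting normal Jordan isomorphism, and then upgrades the essential ideals to complementary central projections via weak$^*$ closures and normality; that is, it runs through exactly the tool the paper uses, relocated to the bidual. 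So this is a citation gap rather than a logical one: either supply that argument, or apply Bre\v{s}ar directly to $J\colon A\to B$ as the paper does. Similarly, your square-zero lifting step rests on a genuine known result, but it is avoidable: lifting the zero-product pair $(\bar b,\bar a)$ produced by Lemma \ref{lem:larry} via Akemann--Pedersen \cite{akemann}, as the paper does, requires no projectivity of cones.
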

\begin{proof}
In view of Theorem \ref{thm:C-dp}, it suffices to verify that the surjective Jordan
homomorphism $J: A\to B$ is  multiplicative.
By  Bre\v{s}ar's theorem \cite[Theorem 2.3]{bresar89},
there are closed ideals $I_1,I_2$ of $A$ and $I'_1, I'_2$ of $B$ satisfying the following properties.
\begin{enumerate}[(i)]
    \item $I_1 + I_2$ is an essential ideal of $A$ with $I_1\cap I_2 = \ker J$.
    \item $I'_1 + I'_2$ is an essential ideal of $B$ with $I'_1\cap I'_2 = \{0\}$.
    \item  $J(I_1)=I'_1$ and $J(I_2)=I'_2$.
    \item $J(ux) = J(u)J(x), \ \forall u\in I_1, \forall x\in A$.
    \item $J(vx) = J(x)J(v), \ \forall v\in I_2, \forall x\in A$.
\end{enumerate}

Let $I$ be the kernel of the Jordan homomorphism
$J|_{I_2}$.  Then $I$ is a closed two-sided ideal of the C*-algebra $I_2$ (\cite{Civin65}).
Therefore, $J$ induces a Jordan isomorphism $\tilde{J}$ from the C*-algebra $I_2/I$ onto $I'_2$.
By (v), $\tilde{J}$ is anti-multiplicative.

We claim that $I'_2$ is commutative.  Otherwise, there will be $a,b$ in $I'_2$ such that
$a'b' =0$ but ${b'\,}^n {a'\,}^n\neq 0$ by Lemma \ref{lem:larry}.
Let $a,b\in  I_2/I$ such that  $\tilde{J}(a)=a'$ and $\tilde{J}(b)=b'$.
Then
$$
\tilde{J}(ba)=\tilde{J}(a)\tilde{J}(b)= a'b' = 0.
$$
Since $\tilde{J}$ is injective, $ba=0$ in $I_2/I$.
By \cite[Proposition 2.3]{akemann}
(see also \cite[Lemma 4.14]{CKLW03}), there are $c,d$ in $I_2$ such that $a=c+I$, $b=d+I$ and $dc=0$.
It follows from the zero product preserving property of $P$ that
$$
P(d)P(c) = h^2J(d)^nJ(c)^n = 0.
$$
Since $h$ is invertible, we have
$J(d)^nJ(c)^n=0$.  This in turn provides a contradiction that
$$
0 = \tilde{J}(b)^n \tilde{J}(a)^n = {b'\,}^n{a'\,}^n \neq 0,
$$
which verifies the commutativity of $I'_2$.

Denote by $W=I_1 + I_2$  the essential ideal  of $A$.  It follows from  (iv) and (v) that
$J(wx) = J(w)J(x)$ for all  $w$ in $W$ and $x$ in $A$.
Consequently, for all $w$ in $W$ and $x,y$ in $A$ it holds
$$
J(w)J(xy)= J(wxy) = J(wx)J(y)= J(w)J(x)J(y).
$$
It turns out that
$$
J(W)(J(xy)-J(x)J(y)) = 0.
$$
As $J(W)=I'_1 + I'_2$ is an essential ideal of $B$ by (ii), we establish the desired conclusion that
$J(xy)=J(x)J(y)$, and thus $J$ is an algebra homomorphism.
\end{proof}

Recall that a standard C*-algebra $A$ on a Hilbert space $\cH$ is a C*-subalgebra of $B(\cH)$ containing
all compact operators.  In particular, $B(\cH)$ and $K(\cH)$, the C*-algebra of compact operators, are standard.

\begin{corollary}\label{cor:BH}
Let $\cH$ be a complex Hilbert space of arbitrary dimension.
Let $A$ be a standard C*-algebra on $\cH$.
Let $P:A\to A$ be a bounded orthogonally additive $n$-homogeneous polynomial such that
$A=\operatorname{span} P(A)$.
\begin{enumerate}[(a)]

    \item If $P(a)P(b)=0$ whenever $a,b\in A_+$ with $ab=0$, then there exist a nonzero scalar $\lambda$ and
    an invertible operator $S$ in $B(\cH)$ such that either
    $$
    P(a) = \lambda Sa^nS^{-1} ,\ \forall a\in A\quad\text{or}\quad P(a) = \lambda S(a^t)^nS^{-1},\ \forall a\in A.
    $$

    \item If $P(a)P(b)=0$ whenever $a,b\in A$ with $ab=0$, then there exist a nonzero scalar $\lambda$ and
     an invertible operator $S$ in $B(\cH)$ such that
    $$
    P(a) = \lambda Sa^nS^{-1} ,\ \forall a\in A.
    $$
\end{enumerate}
\end{corollary}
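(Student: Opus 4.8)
The plan is to apply the two structure theorems just established and then to exploit that $A$ contains $K(\cH)$. For part (a) the hypotheses are precisely those of Theorem \ref{thm:C-dp}(b) with $B=A=\operatorname{span}P(A)$, so I obtain a central invertible multiplier $h$ of $A$ and a surjective Jordan homomorphism $J:A\to A$ with $P(a)=hJ(a)^n$ for all $a\in A$; for part (b) Theorem \ref{thm:C-dp-type1} applies verbatim and in addition makes $J$ multiplicative. In both cases I would first show that $h$ is a nonzero scalar. Regarded as an operator on $\cH$ (the multiplier algebra of a standard $A$ being its idealizer in $B(\cH)$), $h$ commutes with every element of $A$, in particular with all of $K(\cH)$; since $K(\cH)$ acts irreducibly on $\cH$ its commutant is $\C 1$, so $h=\lambda 1$, and $\lambda\neq 0$ because $h$ is invertible.

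Next I would pin down $J$. Because $K(\cH)$ is an essential ideal, every nonzero closed ideal of $A$ contains it, so $A$ is prime. Running the Bre\v{s}ar decomposition exactly as in the proof of Theorem \ref{thm:C-dp-type1}, primeness forces one of the ideals $I'_1,I'_2\subseteq A$ to be $\{0\}$, whence the surjective Jordan homomorphism $J$ is \emph{either} an algebra homomorphism \emph{or} an algebra anti-homomorphism (in part (b) it is already a homomorphism). To see that $J$ is bijective I would restrict it to $K(\cH)$: as $K(\cH)$ is simple, $\ker(J|_{K(\cH)})$ is $\{0\}$ or all of $K(\cH)$. In the first case, $K(\cH)$ being essential, $\ker J\cap K(\cH)=\{0\}$ forces $\ker J=\{0\}$ and we are done. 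The remaining possibility $K(\cH)\subseteq\ker J$ is exactly what must be excluded, and this is the main obstacle. Here I would use that $J$ induces a Banach-algebra isomorphism of $A/\ker J$ onto $A$, under which the unique minimal nonzero closed ideal $K(\cH)$ of $A$ (unique precisely because it is essential and simple) would have to correspond to the minimal ideal of the proper quotient $A/\ker J$; tracking this minimal-ideal (equivalently, socle) structure forces $\ker J=\{0\}$. Thus $J$ is a Jordan automorphism of $A$, of homomorphism or anti-homomorphism type.

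It then remains to realise $J$ spatially. Invoking the classical fact that every algebra automorphism of a standard operator algebra on $\cH$ is implemented by a bounded invertible $S\in B(\cH)$, the homomorphism case gives $J(a)=SaS^{-1}$; composing the anti-homomorphism case with the transpose anti-automorphism $a\mapsto a^t$ reduces it to the previous one and yields $J(a)=Sa^tS^{-1}$. Substituting into $P(a)=hJ(a)^n=\lambda J(a)^n$ and using $(SaS^{-1})^n=Sa^nS^{-1}$ (resp.\ $(Sa^tS^{-1})^n=S(a^t)^nS^{-1}$) produces the two asserted formulas, the transpose alternative being excluded in part (b) because there $J$ is multiplicative. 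Surjectivity of $J$ is handed to us for free by $A=\operatorname{span}P(A)$, so the genuine work lies in the injectivity step above; it is there that the standing hypothesis that $A$ is \emph{standard}, rather than an arbitrary prime C*-algebra, is indispensable, since it both identifies the center of the multipliers with the scalars and guarantees that the cited spatial-implementation theorem for honest isomorphisms is applicable.
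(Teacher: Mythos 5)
Your overall route coincides with the paper's: apply Theorem \ref{thm:C-dp}(b) (resp.\ Theorem \ref{thm:C-dp-type1}) to write $P(a)=hJ(a)^n$ with $J$ a surjective Jordan (resp.\ algebra) homomorphism, identify $h$ with a nonzero scalar from the triviality of the commutant of $K(\cH)$, prove that $J$ is bijective, and conclude with the spatial form of Jordan and algebra automorphisms of standard operator algebras (\cite{Monlar00,Monlar02,Chernoff73}). Your scalar identification of $h$, the dichotomy ``either $\ker J\cap K(\cH)=\{0\}$, in which case essentiality of $K(\cH)$ gives $\ker J=\{0\}$, or $K(\cH)\subseteq\ker J$,'' and the final implementation step (including the exclusion of the transpose form in part (b)) are all sound.

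The gap is exactly where you yourself locate the main obstacle, and your proposed resolution does not close it. To exclude $K(\cH)\subseteq\ker J$ you invoke the induced isomorphism $A/\ker J\cong A$ and claim that ``tracking the minimal-ideal (equivalently, socle) structure forces $\ker J=\{0\}$.'' But the invariants you track survive passage to quotients by ideals containing the compacts, so no contradiction arises at this level. Two examples: (i) if $P$ is an infinite-rank, infinite-corank projection and $A=K(\cH)+\C P$, then $A$ is standard and $A/K(\cH)\cong\C$ still has a minimal idempotent, so a nonzero socle persists in the quotient; (ii) if $M\subseteq B(\cH)$ is the preimage, under the quotient map onto the Calkin algebra, of a C*-subalgebra of the Calkin algebra isomorphic to $K(\cH)$ (such subalgebras exist, e.g.\ via $T\mapsto T\otimes 1$), then $M$ is standard and $M/K(\cH)\cong K(\cH)$, so the quotient even has a unique minimal nonzero closed ideal isomorphic to $K(\cH)$ --- precisely the configuration your argument is supposed to rule out. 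In both examples $A\not\cong A/I$, but for reasons (non-simplicity versus simplicity, here) that your stated invariants do not detect; in general the assertion that a standard C*-algebra admits no surjective bounded Jordan homomorphism onto itself with nonzero kernel is a genuine lemma, and the paper obtains it precisely by citing \cite[Lemma 2]{KLW04} together with its proof (after noting via \cite{Civin65} that $\ker J$ is a two-sided ideal). This is the one ingredient your write-up replaces by a heuristic. A minor remark: once bijectivity is known, your primeness/Bre\v{s}ar reduction to homomorphism-or-antihomomorphism, though correct, can be bypassed by quoting Moln\'ar's description of Jordan isomorphisms of standard operator algebras directly, which is what the paper does.
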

\begin{proof}
By Theorems   \ref{thm:C-dp} and \ref{thm:C-dp-type1}, we  obtain a surjective
Jordan or algebra homomorphism $J:A\to A$.  Note that the kernel of $J$ is a two-sided ideal of $A$ (\cite{Civin65}).
It follows from \cite[Lemma 2]{KLW04} (and its proof) that $J$ is indeed bijective.
The assertions then follow from the known facts about
Jordan and algebra automorphism of standard C*-algebras and the triviality of the center of $A$ (see, e.g., \cite{Monlar00}, \cite{Monlar02},
\cite[Corollary 3.2]{Chernoff73} and \cite[\S 6]{Palmer94}).
\end{proof}

For holomorphic maps of matrices, we
have a counterpart to Theorem \ref{thm:BSTPdphf}.

\begin{theorem}[{\cite{BLW14}}]\label{thm:main-matrix}
Let $m$ and $s$ be
positive integers with $m\geq 2$ and $m\geq s$.
Let $H: B_{M_m}(0;r)\to M_s$ be a holomorphic function between  complex matrix algebras.
Assume $H$ is orthogonally additive and zero product preserving on self-adjoint elements.
Then either
\begin{enumerate}[(a)]
    \item the range of $H$ consists of zero trace elements
(this case occurs whenever $s<m$), or
    \item $s=m$, and
there exist a  scalar sequence $\{\lambda_n\}$ (some $\lambda_n$ can be zero) and an invertible $m\times m$
    matrix $S$ such that
\begin{align}\label{eq:ddag}
    H(x) = \sum_{n\geq 1} \lambda_n S^{-1}x^nS, \quad\forall x\in B_{M_m}(0;r),
\end{align}
or
    $$
    H(x) = \sum_{n\geq 1} \lambda_n S^{-1}(x^t)^nS, \quad\forall x\in B_{M_m}(0;r).
    $$
\end{enumerate}
In the  case (b), we always have the representation \eqref{eq:ddag} when $H$   preserves zero products, i.e.,
$$
ab =0 \quad\implies H(a)H(b) = 0, \qquad \forall a,b \in B_{M_m}(0;r).
$$
\end{theorem}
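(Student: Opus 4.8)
The plan is to reduce the holomorphic map to its homogeneous components and classify each one. First I would expand $H=\sum_{n\ge 0}P_n$ in its Taylor series at $0$. Since $H$ is orthogonally additive and zero product preserving on self-adjoint elements, Proposition \ref{prop} gives that each $P_n$ is orthogonally additive and zero product preserving on $(M_m)_{sa}$, with $P_0=0$, and also supplies the cross-degree relation $ab=0\Rightarrow P_j(a)P_k(b)=0$ for positive $a,b$ and all $j,k$. By Theorem \ref{thm:C-form} each $P_n$ linearizes as $P_n(a)=T_n(a^n)$ for a bounded linear $T_n\colon M_m\to M_s$, and (exactly as in the proof of Theorem \ref{thm:C-dp}, now in finite dimensions so that $T^{**}=T$ and $h_n:=T_n(1)=P_n(1)$) one gets the weighted Jordan identity $T_n(a)^2=h_nT_n(a^2)=T_n(a^2)h_n$. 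In particular $h_n$ commutes with the entire range $R_n:=T_n(M_m)$.

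For a fixed $n$ I would split according to whether $h_n$ is invertible. Let $p$ be the spectral (Riesz) idempotent of $h_n$ associated with its nonzero spectrum; as $p$ is a function of $h_n$ it commutes with $R_n$, and $a\mapsto (ph_n)^{-1}\,pT_n(a)$ is then a Jordan homomorphism of $M_m$ into the corner $pM_sp$. Since $M_m$ is simple, this Jordan homomorphism is either zero or injective. Injectivity forces $\operatorname{rank}(p)\ge m$, whence $s=m$ and $p=1$, so $h_n$ is invertible; the zero case forces $ph_n=0$, hence $p=0$. Thus $h_n$ is \emph{either invertible or nilpotent}. When $h_n$ is invertible, $s=m$ and $\operatorname{span}P_n(M_m)=M_m$, so Corollary \ref{cor:BH}(a) applies and produces a nonzero scalar $\lambda_n$ and an invertible $S_n$ with $P_n(a)=\lambda_nS_na^nS_n^{-1}$ or $\lambda_nS_n(a^t)^nS_n^{-1}$. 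When $h_n$ is nilpotent, for every projection $q$ the identity $T_n(q)^2=h_nT_n(q)$ together with $[h_n,T_n(q)]=0$ gives $T_n(q)^k=h_n^{k-1}T_n(q)$, so $T_n(q)$ is nilpotent and $\operatorname{tr}T_n(q)=0$; since every self-adjoint element is a real combination of projections, $\operatorname{tr}\circ T_n$ vanishes on $(M_m)_{sa}$ and hence on all of $M_m$, i.e. the range of $P_n$ consists of zero-trace elements.

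It remains to assemble the pieces. If every $h_n$ is nilpotent, every $P_n$ has traceless range and we are in case (a); by the rank count this is forced whenever $s<m$. Otherwise some $h_{n_0}$ is invertible, so $s=m$; after conjugating I may assume $P_{n_0}(a)=\lambda_{n_0}a^{n_0}$. Here the cross-degree relations $P_{n_0}(a)P_n(b)=P_n(a)P_{n_0}(b)=0$ for orthogonal positive $a,b$ show that $T_n$ preserves each corner $qM_mq$; for rank-one $q$ this yields $T_n(q)\in\mathbb{C}q$, so any nilpotent $P_n$ coexisting with $P_{n_0}$ must vanish. For the remaining invertible-type components $P_n(a)=\lambda_nS_na^nS_n^{-1}$, testing $a^n(S_n^{-1})b^k=0$ on orthogonal rank-one projections makes every vector an eigenvector of $S_n^{-1}$ (and rules out mixing the homomorphism and anti-homomorphism alternatives), forcing $S_n$ to be a scalar multiple of a single $S$. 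This produces \eqref{eq:ddag} (or its transpose form) with a uniform $S$ and scalars $\lambda_n$, some possibly $0$. Finally, if $H$ preserves all zero products then so does each $P_n$, and Corollary \ref{cor:BH}(b) upgrades the Jordan isomorphism to an algebra isomorphism, eliminating the transpose alternative and giving exactly \eqref{eq:ddag}.

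I expect the main obstacle to be the genuinely degenerate regime where $h_n$ is non-invertible: there the range of $P_n$ need not be a C*-subalgebra—Example \ref{ex:trival-multiplication} shows it may sit inside the nilpotents—so none of the C*-algebraic structure theorems of Section \ref{s:zp} applies directly. The crux is therefore the spectral-idempotent argument that promotes ``non-invertible'' to ``nilpotent'' using only simplicity of $M_m$ and the constraint $s\le m$, from which the trace vanishing follows; a secondary difficulty is gluing the separate similarities $S_n$ into one $S$, which the eigenvector argument from the cross-degree orthogonality is designed to settle.
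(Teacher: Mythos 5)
The paper itself does not prove Theorem~\ref{thm:main-matrix}: it is imported verbatim from \cite{BLW14}, so there is no in-paper argument to compare your proposal against; I therefore assess it on its own terms. As far as I can check, your proof is correct, and it has the merit of deriving the matrix theorem entirely from the paper's own toolkit (Proposition~\ref{prop}, Theorem~\ref{thm:C-form}, the weighted identity from the proof of Theorem~\ref{thm:C-dp}, and Corollary~\ref{cor:BH}) plus elementary finite-dimensional arguments. The two moves that carry the proof are sound: (i) the Riesz-idempotent argument, which upgrades ``$h_n=T_n(1)$ not invertible'' to ``$h_n$ nilpotent'' via the dimension count $m^2\le(\operatorname{rank}p)^2\le s^2\le m^2$ (this is exactly where $s\le m$ enters, and it also yields the parenthetical claim that case (a) is forced when $s<m$); and (ii) the identity $T_n(q)^k=h_n^{k-1}T_n(q)$ on projections, which makes the range of a nilpotent-type component traceless. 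The assembly via the cross-degree relations of Proposition~\ref{prop}(b) (corner-preservation killing nilpotent components, and scalarity of $S_n$ for invertible ones) is also correct.

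Three compressed steps should be written out in a final version, though none is a gap in substance. First, ``zero or injective'' for the corner Jordan homomorphism $\Phi=(ph_n)^{-1}pT_n(\cdot)$ needs a citation: $\ker\Phi$ is a priori only a Jordan ideal, and you must invoke Herstein's theorem that a simple associative ring of characteristic not $2$ has no proper nonzero Jordan ideals; the paper's reference \cite{Civin65} is used there for \emph{surjective} Jordan homomorphisms and does not apply verbatim to your $\Phi$. Second, the exclusion of a transpose-type component coexisting with $P_{n_0}(a)=\lambda_{n_0}a^{n_0}$ deserves the full argument: the cross relation gives $(1-p)S_np^t=0$ for rank-one projections $p=vv^*$, i.e.\ $S_n\bar v\in\mathbb{C}v$ for every unit vector $v$, so the conjugate-linear map $v\mapsto S_n\bar v$ has every vector as an eigenvector, which forces it to be $0$ when $m\ge2$, contradicting invertibility of $S_n$. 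Third, the weighted identity $T_n(a)^2=h_nT_n(a^2)=T_n(a^2)h_n$ for \emph{all} $a$ (not just self-adjoint $a$) requires the polarization step, and the commutation of $h_n$ with the full range uses that $M_m$ is spanned by squares (e.g.\ by idempotents); both are routine but should be said.
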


The following examples borrowed from {\cite{CKLW03,BLW14}} tell us that one cannot get a complete analog to Theorem \ref{thm:BSTPdphf} for the non-commutative case.
We also remark that Example \ref{ex:trival-multiplication}(c) below tells us that a similar conclusion of \cite[Theorem 18]{GPPI13}
for orthogonally additive and doubly orthogonality preserving holomorphic functions does not hold for zero product preserving ones.

\begin{examples}\label{ex:trival-multiplication}
Let $\{e_n\}$ be an orthonormal basis of a separable Hilbert space $\cH$.
Let $E_{ij}=e_i\otimes e_j$ be the matrix unit in $B(\cH)$ given by $E_{ij}(h) = \langle h, e_j\rangle e_i$.
\begin{enumerate}[(a)]
    \item
Consider the linear map
$\theta:B(\cH)\to B(\cH)$ defined by
$\theta(T):= E_{11}TE_{12}$.  Then $\theta$ is a bounded linear (and thus holomorphic) map.  Since
the range of $\theta$ has trivial multiplication, $\theta$ is zero product preserving.  However, $\theta$ cannot be written in the standard form
as stated in Corollary \ref{cor:BH} or Theorem \ref{thm:main-matrix}.
    \item
Consider $\theta:M_k\to M_{k+2}$ defined by
$$
\begin{pmatrix}a_{ij}\end{pmatrix}
\mapsto
\begin{pmatrix}
    0&a_{11}&a_{12}&\ldots&a_{1k}&0\\
    0&0&0&\ldots&0&a_{11}\\
    0&0&0&\ldots&0&a_{21}\\
    &\vdots&&\ddots&&\vdots\\
    0&0&0&\ldots&0&a_{k1}\\
        0&0&0&\ldots&0&0
    \end{pmatrix}.\quad
$$
Then $\theta$ is  linear (and thus holomorphic), and zero product preserving
on self-adjoint elements.  Note that
the range of $\theta$ does not have trivial multiplication, since
$\theta(E_{11})^2=E_{1,k+2}$.
However,  the range of $\theta$ consists of elements of zero trace.
We verify that $\theta$ cannot be written as the form $c\varphi$
for any fixed element $c$ in $M_{k+2}$ and any homomorphism or anti-homomorphism
$\varphi:M_k\to M_{k+2}$.
Assume, for example, that $\theta=c\vp$ and $\vp$ is a homomorphism.  Then we arrive at a contradiction
\begin{align*}
E_{1,k+2}&=\theta(E_{11})^2=\theta(E_{11})c\vp(E_{11})
    =\theta(E_{11})c(\vp(E_{12})\vp(E_{21}))\\
    &%=\theta(E_{11})(c\vp(E_{12}))\vp(E_{21}))
    =\theta(E_{11})\theta(E_{12})\vp(E_{21})
    =0\vp(E_{21})=0.
\end{align*}

     \item Let $E$ and $F$ be the isometries in $B(\cH)$ such that $E(e_n)=e_{2n}$ and $F(e_n)=e_{2n-1}$ for $n=1,2,\ldots$, respectively.
Define a holomorphic function $\theta:B(\cH)\to B(\cH)$ by
$$
\theta(a) = EaE^* + Fa^2F^*, \quad\forall a\in B(\cH).
$$
Then $\theta$ is orthogonally additive and zero product preserving.
The range of $\theta$ contains the identity $\theta(1)=1$.  However, it cannot be written in any form as stated in Theorem \ref{thm:main-matrix}(b).
\end{enumerate}
\end{examples}

To get an analog result (Theorem \ref{thm:Conformal} below) to Theorem \ref{thm:BSTPdphf} for holomorphic maps between general C*-algebras, we need the following lemma.

\begin{lemma}\label{lem:OMH-alg}
Let $A,B$ be C*-algebras, $r>0$, and  $H=\sum_{n\geq1} P_n : B_A(0;r)\to B$ be an
orthogonally additive holomorphic map.
Suppose $H$ is zero product preserving on positive
(resp.\ all) elements in
$B_A(0;r)$.  Assume there is a polynomial term $P_k(x) = h_kJ(x)^k$
providing a  central invertible multiplier $h_k$ in $M(B)$ and a Jordan (resp.\ algebra) isomorphism $J:A\to B$.  Then
there are central multipliers $h_n$ in $M(B)$ for $n\geq1$ such that
$$
H(a) = \sum_{n\geq 1} h_nJ(a)^n,\quad\forall a\in B_A(0;r).
$$
\end{lemma}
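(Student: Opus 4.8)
The plan is to show, term by term, that every polynomial $P_n$ in the Taylor series carries the \emph{same} symbol $J$ as the distinguished term $P_k$, the only new ingredient being a central weight $h_n$. First I would linearize: by Theorem \ref{thm:C-form} write $P_n(a)=T_n(a^n)$ for bounded linear operators $T_n:A\to B$. Applying this to the given term $P_k(a)=h_kJ(a)^k=T_k(a^k)$ and using that a Jordan (or algebra) isomorphism preserves powers of a single element, i.e. $J(b^{1/k})^k=J(b)$ for $b\in A_+$, I obtain $T_k(b)=h_kJ(b)$ on positive $b$, hence $T_k=h_kJ$ by linearity. Since $h_k$ is a central invertible multiplier, this identifies $J=h_k^{-1}T_k$.

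Next I would exploit the cross zero-product relations. By Proposition \ref{prop}(b), $fg=0$ forces $P_m(f)P_n(g)=0$ for all indices. Given positive $u,v\in A$ with $uv=0$, substituting $f=u^{1/n}$ and $g=v^{1/k}$ (still orthogonal positives) yields $T_n(u)T_k(v)=0$, and symmetrically $T_k(u)T_n(v)=0$. Transporting to $B$ through $J$ — which preserves positivity and orthogonality of positive elements — and cancelling the central invertible $h_k$, I set $S_n:=T_nJ^{-1}:B\to B$ and obtain, for every pair of orthogonal positive $p,q\in B$, the two identities $S_n(p)q=0$ and $pS_n(q)=0$.

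The heart of the argument, and the step I expect to be the main obstacle, is to upgrade these orthogonality identities into the statement that $S_n$ is multiplication by a central multiplier. Working in the bidual von Neumann algebra $B^{**}$, I would let $h_n:=S_n^{**}(1)$ and, for a positive $p$ with open support projection $e=s(p)$, approximate $1-e$ in the weak$^*$ topology by an approximate unit of the hereditary subalgebra orthogonal to $p$; the identities above then localize $S_n(p)$ into $eB^{**}e$, i.e. $S_n(p)=e\,S_n(p)\,e$. Decomposing $1=e+(1-e)$ gives $S_n^{**}(e)=eh_ne$ together with the off-diagonal vanishing $eh_n(1-e)=(1-e)h_ne=0$, so $h_n$ commutes with every open support projection, hence with every positive element of $B$ by spectral theory, hence lies in the center of $B^{**}$; since $h_np=S_n(p)\in B$, it is a central multiplier of $B$. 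A spectral-resolution argument using weak$^*$-continuity of $S_n^{**}$ then yields $S_n(p)=h_np$ for all $p\in B_+$, whence $T_n=h_nJ$ and $P_n(a)=T_n(a^n)=h_nJ(a)^n$. This localization-to-centralizer step reuses precisely the bidual techniques behind \cite[Lemma 4.5]{CKLW03} and \cite[Lemma 2.3]{W07} (and the proof of Theorem \ref{thm:C-dp}); the delicacy is handling closed versus open projections and the nonunital case.

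Finally, since $P_n(a)=h_nJ(a)^n$ holds termwise and $H=\sum_{n\geq1}P_n$ converges uniformly on $B_A(0;r)$ by hypothesis, the representation $H(a)=\sum_{n\geq1}h_nJ(a)^n$ follows with the same convergence. I would remark that the positive-element and all-element cases run identically: only zero-product preservation on positives is used, and both a Jordan and an algebra $*$-isomorphism preserve positivity, orthogonality of positive elements, and powers of a single element, so no separate treatment of the two cases is needed beyond invoking the respective hypothesis on $J$.
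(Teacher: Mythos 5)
Your overall skeleton --- linearize each $P_n$, extract cross-orthogonality relations from Proposition \ref{prop}(b), run a bidual centralizer argument to produce the weights $h_n$, and reassemble the series --- is the same as the paper's, and your first two steps are correct: $T_k=h_kJ$, and for orthogonal $u,v\in A_+$ one gets $T_n(u)J(v)=J(v)T_n(u)=0$ and $J(u)T_n(v)=T_n(v)J(u)=0$. The gap is the transport step. You assert that $J$ ``preserves positivity and orthogonality of positive elements'' and conclude that $S_n(p)q=0=pS_n(q)$ for \emph{every} orthogonal positive pair $p,q$ in $B$. But the lemma does not assume $J$ is a $*$-isomorphism, and in the paper's intended applications it is not: in Corollary \ref{cor:BH} the Jordan (or algebra) isomorphism has the form $a\mapsto SaS^{-1}$ with $S$ merely invertible, and such a map does not carry positive elements to positive elements (conjugate a rank-one projection by $\operatorname{diag}(1,2)$). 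Your closing remark, that ``both a Jordan and an algebra $*$-isomorphism preserve positivity,'' silently inserts a $*$ that is not in the hypotheses. Consequently your identities are established only for pairs $p,q\in J(A_+)$ whose $J$-preimages are orthogonal positives, not for all orthogonal $p,q\in B_+$, and the support-projection localization you then run in $B^{**}$ --- which quantifies over all positive elements of $B$ orthogonal to a given $p\in B_+$ --- never gets started.

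The repair is to run the centralizer argument on the domain side, where the positivity hypothesis genuinely lives; this is the paper's route. Replace $H$ by $x\mapsto J^{-1}\bigl(h_k^{-1}H(x)\bigr)$, so that one may take $A=B$ and $P_k(x)=x^k$, and study $R_n:=J^{-1}\bigl(h_k^{-1}T_n(\cdot)\bigr):A\to A$. The relations do survive this passage through $J^{-1}$, but for a reason that must be made explicit and that is unavailable in your direction: with $s=x^k\geq 0$ and $t=J^{-1}\bigl(h_k^{-1}T_n(y^n)\bigr)$, the Jordan identities give $st+ts=0$ and $sts=0$, whence $s^2t=s(st+ts)-sts=0$ and $(st)^*(st)=t^*s^2t=0$, so $st=ts=0$. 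The argument hinges on the factor $x^k$ being positive \emph{in $A$}, whereas your version would require $J^{-1}(p)\geq 0$ for arbitrary $p\in B_+$, which is false. One then has $aR_n(b)=R_n(b)a=0$ for orthogonal $a,b\in A_+$, runs the commutative partition argument inside $C(X)\subseteq M(A)$ (the rigorous form of your ``spectral resolution'' sketch, in the spirit of \cite[Lemma 4.5]{CKLW03} and \cite[Lemma 2.3]{W07}), and obtains central multipliers $h_n'$ of $M(A)$ with $R_n(a^n)=h_n'a^n$. Only at the end does one return to $B$, setting $h_n=h_kJ(h_n')$ and using Kadison's theorem \cite[p.~330]{Kadison51} that a surjective Jordan isomorphism carries central multipliers to central multipliers, together with $J(h_n'a^n)=J(h_n')J(a)^n$; this back-transport step, absent from your write-up, is what makes the statement about $h_n\in M(B)$ legitimate.
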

\begin{proof}
Replacing $H$ with the map $x\mapsto J^{-1}(h_k^{-1}H(x))$, we can assume that $P_k(x)=x^k$ for all $x$ in $A=B$.
Let $T_n$ be the bounded linear map associated to $P_n$ such that $P_n(x)=T_n(x^n)$.
For any positive $x,y$ in $A_+$ with $xy=0$, by Proposition \ref{prop}(b) we have
$P_k(x)P_n(y) = P_n(y)P_k(x)=0$.  This gives $x^kT_n(y^n) =  T_n(y^n)x^k=0$.
It follows
\begin{align}\label{eq:local}
aT_n(b)= T_n(b)a =0\quad\text{whenever}\quad ab=0 \text{ and } a,b\in A_+.
\end{align}

Let $x,y\in M(A)_+$ with $xy=0$.  Choose $a_\lambda, b_\lambda\in A_+$ such that
$a_\lambda\uparrow x$ and $b_\lambda\uparrow y$ .  Since $a_\lambda b_\lambda=0$ for
all $\lambda$, \eqref{eq:local} and the $\sigma(A^{**}, A^*)$ continuity of $T_n^{**}$ give
\begin{align}\label{eq:local-M(A)}
xy=0 \quad\implies\quad xT_n^{**}(y)= T_n^{**}(y)x=0, \quad\forall x,y\in M(A)_+.
\end{align}

Let $a\in A_+$ with $\|a\|=1$.
 Identify the $C^*$-subalgebra
of $A$ generated by $1$ and $a$ with $C(X)$, where $X\subseteq
[0,1]$ is the spectrum of $a$.
Under this convention, $C(X)\subseteq M(A)$, and \eqref{eq:local-M(A)} applies.

Denote by $\theta:C(X)^{**}\to B^{**}$
the  map induced from $T_n^{**}$.
For
each positive integer $N$ and each integer $k=-1,0,1,\ldots, N$, let
$$
X_{N,k}= \left(\frac{k}{N},\frac{k+1}{N}\right]\cap X.
$$
Pick an arbitrary point $x_{N,k}$ from each nonempty $X_{N,k}$.
For any $f$ in $C(X)$,  we have
\begin{equation}\label{eq:f}
f=\lim_{N\to\infty} \sum_{X_{N,k}\neq\emptyset} f(x_{N,k})1_{X_{N,k}},
\end{equation}
where $1_{X_{N,k}}$ is the characteristic function of the Borel
set ${X_{N,k}}$, and the limit of the finite sums converges
uniformly on $X$. In particular, for every fixed positive integer
$N$ we have
$$
1=\sum_{X_{N,k}\neq\emptyset} 1_{X_{N,k}}.
$$
For two disjoint nonempty sets $X_{N,j}$ and $X_{N,k}$, we can
find two sequences $\{f_m\}_m$ and $\{g_m\}_m$ in $C(X)$ such that
$f_{m+p}g_m =0$ for $m,p=0,1,\ldots$, and $f_m \to 1_{X_{N,j}}$
and $g_m \to 1_{X_{N,k}}$ pointwisely on $X$.  By the weak
$^*$-continuity of $\theta$,  for all $m=1,2,\ldots$, we have
$$
1_{X_{N,j}}\theta(g_m) = \lim_{p\to\infty} f_{m+p}\theta(g_m)=0,
$$
and
$$
\theta(1_{X_{N,j}})g_m = \lim_{p\to\infty} \theta(f_{m+p})g_m=0.
$$
Thus
$$
1_{X_{N,j}}\theta(1_{X_{N,k}}) = \lim_{m\to\infty} 1_{X_{N,j}}\theta(g_m)=0,
$$
and
$$
\theta(1_{X_{N,j}})1_{X_{N,k}} = \lim_{m\to\infty} \theta(1_{X_{N,j}})g_m=0.
$$
Consequently, for each positive integer $N$ and each integer $j=-1,0,1,\ldots,N$,
we have
\begin{equation}\label{eq:h}
\theta(1_{X_{N,j}}) = \sum_{X_{N,k}\neq\emptyset} 1_{X_{N,k}}\theta(1_{X_{N,j}})
= 1_{X_{N,j}}\theta(1_{X_{N,j}}) = 1_{X_{N,j}}\theta(1)
= \theta(1) 1_{X_{N,j}}.
\end{equation}
It follows from \eqref{eq:f} and \eqref{eq:h} that
$$
\theta(f)=f\theta(1)=\theta(1)f, \quad\forall f\in C(X)_+.
$$
In particular, we have
$$
T_n(a)=aT_n^{**}(1)=T_n^{**}(1)a
$$
holds for all positive norm one, and thus all, elements $a$ in $A$.

Set $h'_n=T_n^{**}(1)$  for all $n=1,2,\ldots$.
We have thus obtained  a sequence $\{h'_n\}$ of central multipliers in $M(A)$ such that
$$
H(a) = \sum_{n\geq1} P_n(a) = \sum_{n\geq1} T_n(a^n) = \sum_{n\geq1} h'_n a^n, \quad\forall b\in A.
$$
Note that $h'_k=1$.

Going back to the original setting, we set $h_n= h_kJ(h'_n)$.
Since the surjective Jordan isomorphism $J$ sends central multipliers to central multipliers
\cite[P.~330]{Kadison51},
all $h_n$ are central multipliers in $M(B)$.  Moreover, we have
$$
J(h'_n a^n) = J(h'_na^n + a^n h'_n)/2 = J(h'_n)J(a^n), \quad\forall a\in A, n=1,2,\ldots.
$$
Consequently.
$$
H(a) = h_k J( \sum_{n\geq 1} h'_n a^n) = \sum_{n\geq 1} h_n J(a)^n, \quad\forall a\in A.
$$
\end{proof}

Recall that a holomorphic map $H$ is {conformal} (at $0$) if its derivative $P_1$ (at $0$) is a bounded invertible linear operator.
Combining Theorems  \ref{thm:C-dp} and \ref{thm:C-dp-type1}, and Lemma \ref{lem:OMH-alg}, we have the following main result of this section.

\begin{theorem}\label{thm:Conformal}
Let $A,B$ be C*-algebras, $r>0$, and  $H: B_A(0;r)\to B$ be an orthogonally additive
conformal holomorphic map.
Suppose $H$ is zero product preserving on positive
(resp. all) elements in
$B_A(0;r)$. Then there exist a sequence $\{h_n\}$ of central multiplier  in $M(B)$ and a Jordan (resp.\ algebra) isomorphism $J:A\to B$ such that
\begin{align}\label{eq:Conf-J}
H(a) = \sum_{n\geq 1} h_nJ(a)^n,\quad\forall a\in B_A(0;r).
\end{align}
\end{theorem}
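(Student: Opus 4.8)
The plan is to use the conformality hypothesis to manufacture the required isomorphism out of the first-order Taylor coefficient, and then let Lemma \ref{lem:OMH-alg} spread that rigid structure across all higher-order terms. First I would write the Taylor expansion $H=\sum_{n\geq 0}P_n$ at $0$. By Proposition \ref{prop} each $P_n$ is then a bounded orthogonally additive $n$-homogeneous polynomial that is zero product preserving on positive (resp.\ all) elements, and $P_0=0$. The assumption that $H$ is conformal says precisely that the degree-one term $P_1$ is a bounded invertible linear operator from $A$ onto $B$.

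Next I would apply Theorem \ref{thm:C-dp} (resp.\ Theorem \ref{thm:C-dp-type1}) to $P_1$ in the case $n=1$; here the associated linear operator of Theorem \ref{thm:C-form} is $P_1$ itself, and $h_1:=P_1^{**}(1)$. Since $P_1$ is surjective we have $B=\operatorname{span}P_1(A)$, so part (b) of Theorem \ref{thm:C-dp} forces $h_1$ to be a central invertible multiplier of $B$ and makes $J:=h_1^{-1}P_1$ a surjective Jordan homomorphism (in the "all elements" case, Theorem \ref{thm:C-dp-type1} upgrades this to a surjective algebra homomorphism). Because $P_1$ is injective it certainly has trivial positive kernel, so the injectivity clause of these theorems makes $J$ one-to-one. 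Hence $J:A\to B$ is a Jordan (resp.\ algebra) isomorphism and $P_1(a)=h_1J(a)$.

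Finally I would invoke Lemma \ref{lem:OMH-alg} with $k=1$. Its hypothesis asks for a single polynomial term of the form $P_k(x)=h_kJ(x)^k$ in which $h_k$ is a central invertible multiplier and $J$ a Jordan (resp.\ algebra) isomorphism; this is now met with $k=1$, $h_k=h_1$, and the $J$ just constructed. The lemma then yields central multipliers $h_n\in M(B)$ for every $n\geq 1$ with $H(a)=\sum_{n\geq 1}h_nJ(a)^n$, which is exactly \eqref{eq:Conf-J}. As the substantive analysis is already discharged inside Theorems \ref{thm:C-dp} and \ref{thm:C-dp-type1} and in Lemma \ref{lem:OMH-alg}, the only delicate point in this assembly is checking that conformality does double duty: surjectivity of $P_1$ is what forces $h_1$ invertible and $J$ onto, while injectivity of $P_1$ is what forces $J$ one-to-one, so that the degree-one homomorphism is genuinely an isomorphism. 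Everything else is bookkeeping.
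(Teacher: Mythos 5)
Your proposal is correct and is exactly the paper's argument: the paper proves Theorem \ref{thm:Conformal} by the one-line combination of Theorem \ref{thm:C-dp} (resp.\ Theorem \ref{thm:C-dp-type1}) applied to the degree-one term $P_1$ and Lemma \ref{lem:OMH-alg} with $k=1$, which is precisely what you carried out. Your fleshing-out of the details --- that $T=P_1$ when $n=1$, that surjectivity of $P_1$ gives $B=\operatorname{span}P_1(A)$ and hence the invertible central multiplier $h_1$ with $J=h_1^{-1}P_1$ onto, and that injectivity of $P_1$ gives trivial positive kernel and hence $J$ injective --- is accurate and complete.
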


%Recall that a standard C*-algebra $A$ is a C*-subalgebra of $B(\cH)$ containing all finite rank operators.
The following result supplements Theorem \ref{thm:main-matrix}.  Note however that without the conformal assumption we might not have a positive result, as Example \ref{ex:trival-multiplication} demonstrates.

\begin{corollary}\label{cor:BHConformal}
Let $A$ and  $B$ be  standard C*-algebras on Hilbert space $\cH_1$ and $\cH_2$, respectively.
Let $H: B_{A}(0;r)\to B$ be an orthogonally additive conformal holomorphic map.
Suppose $H$ is zero product preserving on positive elements.
Then there exist a sequence $\{\lambda_n\}$ of scalars and  an invertible operator $S: \cH_2\to \cH_1$ such that either
\begin{align}\label{eq:BH-zpC}
    H(x) = \sum_{n\geq 1} \lambda_n S^{-1}x^nS, \quad\forall x\in B_{A}(0;r),
\end{align}
or
    $$
    H(x) = \sum_{n\geq 1} \lambda_n S^{-1}(x^t)^nS, \quad\forall x\in B_{A}(0;r).
    $$
If $H$ is zero product preserving on all elements in $B_{A}(0;r)$, then exactly the case
\eqref{eq:BH-zpC} holds.
\end{corollary}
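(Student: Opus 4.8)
The plan is to apply Theorem~\ref{thm:Conformal} and then specialize its conclusion using the structure theory of standard C*-algebras, exactly in the spirit of the single-polynomial Corollary~\ref{cor:BH}. First I would invoke Theorem~\ref{thm:Conformal}: since $H$ is orthogonally additive, conformal, and zero product preserving on positive (resp.\ all) elements of $B_A(0;r)$, there exist central multipliers $h_n\in M(B)$ and a Jordan (resp.\ algebra) isomorphism $J:A\to B$ such that $H(a)=\sum_{n\geq1}h_nJ(a)^n$ on $B_A(0;r)$. This already packages the hard analytic content, so the remaining work is purely structural.

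Next I would reduce the central multipliers to scalars. Because $B$ is standard on $\cH_2$, it contains $K(\cH_2)$ and hence acts irreducibly, so its commutant in $B(\cH_2)$ consists of scalars. Realizing $M(B)$ as the idealizer of $B$ inside $B(\cH_2)$, every central multiplier commutes with $B\supseteq K(\cH_2)$ and therefore lies in that commutant; thus each $h_n$ equals a scalar $\lambda_n$, giving $H(a)=\sum_{n\geq1}\lambda_n J(a)^n$.

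Then I would put $J$ into spatial form. Since $K(\cH_1)$ is a simple and essential ideal of the standard algebra $A$, every nonzero closed ideal of $A$ meets $K(\cH_1)$ nontrivially and hence contains it; so any two nonzero ideals intersect, i.e.\ $A$ (and likewise $B$) is prime. Primeness collapses the Bre\v{s}ar-type decomposition of the Jordan isomorphism $J$ used in Theorem~\ref{thm:C-dp-type1} to a single branch, so $J$ is globally either an algebra isomorphism or an algebra anti-isomorphism. Invoking the spatial classification of such maps between standard C*-algebras (the references cited in the proof of Corollary~\ref{cor:BH}), there is an invertible operator $S:\cH_2\to\cH_1$ with $J(x)=S^{-1}xS$ or $J(x)=S^{-1}x^tS$. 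Substituting $(S^{-1}xS)^n=S^{-1}x^nS$ and $(S^{-1}x^tS)^n=S^{-1}(x^t)^nS$ into the series yields the two asserted representations.

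Finally, when $H$ preserves zero products on all of $B_A(0;r)$, Theorem~\ref{thm:Conformal} already delivers an algebra isomorphism $J$, which is multiplicative and hence cannot be the anti-isomorphism branch $x\mapsto S^{-1}x^tS$; only \eqref{eq:BH-zpC} survives. I do not expect a genuine obstacle in this corollary, since it is an assembly of earlier results; the only points needing care are the triviality of $Z(M(B))$ and the primeness argument that forces the Jordan decomposition into a single branch, both of which are controlled by the simple essential ideal $K(\cH)$ and are already implicit in the proof of Corollary~\ref{cor:BH}.
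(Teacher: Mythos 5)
Your proof is correct and takes essentially the same route as the paper's: apply Theorem \ref{thm:Conformal}, reduce the central multipliers $h_n$ to scalars, and invoke the spatial form of Jordan isomorphisms between standard C*-algebras to obtain the two branches, with the algebra-isomorphism case (zero products preserved on all elements) forcing \eqref{eq:BH-zpC}. The only difference is that you supply proofs of the two facts the paper simply cites as well known --- the triviality of the center of $M(B)$ via the commutant of $K(\cH_2)$, and the isomorphism/anti-isomorphism dichotomy via primeness of standard algebras --- which is a harmless (and correct) elaboration.
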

\begin{proof}
It follows from Theorem \ref{thm:Conformal} that $H$ carries a form as in \eqref{eq:Conf-J}.
Since $M(B)$ has trivial center, all $\lambda_n :=h_n$ are scalars.
It is well-known that, see for example \cite{Monlar00, Monlar02}, the Jordan isomorphism $J: A\to B$ carries a form of either
$$
Jx = S^{-1}xS, \ \forall x\in A, \quad\text{or}\quad Jx=S^{-1}x^tS, \ \forall x\in A,
$$
where $S$ is a bounded invertible operator from $\cH_2$ onto $\cH_1$.
When $H$ preserves zero products on the whole of $B_{A}(0;r)$, the map $J$ is an algebra isomorphism.  Hence, exactly the case \eqref{eq:BH-zpC} happens.
\end{proof}

\section{Orthogonally additive and isometric polynomials}\label{s:isometry}

\begin{lemma}\label{lem:bs-isom}
Let $X$ and $Y$ be locally compact Hausdorff spaces.  Let $S:C_0(X)\to C_0(Y)$ be a
 linear map preserving   norms
of positive functions, i.e.,
$$
\|Sf\|=\|f\|, \quad\forall\; f\in C_0(X)_+.
$$
Suppose that the range of $S$ strongly  separates points in $Y$.
Then there exist
a homeomorphism $\psi$ from $X$ onto $\psi(X)\subseteq Y$, and a continuous unimodular scalar function $k$ on $X$ such that
$$
Sf(\psi(x))=k(x)f(x),\quad \forall\; f\in C_0(X), \;\forall\; x\in X.
$$
If the range of $S$ is regular,  then $S$ is a surjective linear isometry and $\psi(X)=Y$.
\end{lemma}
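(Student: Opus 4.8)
The plan is to analyze $S$ through its adjoint action on point evaluations and to recover $\psi$ as a peak-point map. Write $\mu_y=S^\ast\delta_y\in C_0(X)^\ast$, so that $Sf(y)=\langle\mu_y,f\rangle$. The norm identity on positive functions gives, for every $f\in C_0(X)_+$ and every $y\in Y$, the pointwise bound $|\langle\mu_y,f\rangle|=|Sf(y)|\le\|Sf\|=\|f\|$, together with the attainment statement $\sup_{y\in Y}|\langle\mu_y,f\rangle|=\|f\|$. The representation we are after is exactly the assertion that each nonzero $\mu_y$ is a unimodular multiple of a point mass; dually, to each $x\in X$ we must attach a single point $\psi(x)\in Y$ at which the images of positive functions peaking at $x$ concentrate. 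So the first and central task is a localization statement.

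For the localization I would fix $x_0\in X$ and consider the peak sets $M(f)=\{y\in Y:|Sf(y)|=\|f\|\}$ as $f$ ranges over positive functions normalized by $\|f\|=f(x_0)=1$ with shrinking supports around $x_0$; each $M(f)$ is a nonempty compact subset of $Y$ by norm preservation and the fact that $Sf\in C_0(Y)$. The key quantitative input is the affine identity $\|S(f+tg)\|=\|f+tg\|=\max(1,t)$, valid for $t\ge0$ whenever $f,g\in C_0(X)_+$ have disjoint supports and unit norm; evaluating at a candidate common peak point and letting $t$ vary forces strong restrictions on the phases $\arg Sf(y)$ and $\arg Sg(y)$. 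Using these restrictions together with the strong separation of the range, I would show the peak sets collapse to a single point $\psi(x_0)$, well defined independently of the chosen peaking net, and that the resulting functional $f\mapsto Sf(\psi(x_0))$ annihilates every positive $f$ with $f(x_0)=0$; this yields $Sf(\psi(x_0))=k(x_0)f(x_0)$ first for positive $f$ and then, by complex linearity, for all $f\in C_0(X)$, with $|k(x_0)|=1$ forced by norm preservation.

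With $\psi$ and $k$ in hand the topological conclusions follow. The map $\psi$ is injective, since $\psi(x_1)=\psi(x_2)$ would force $k(x_1)f(x_1)=k(x_2)f(x_2)$ for all $f$, which fails for $x_1\ne x_2$; continuity of $\psi$ and of $k$, and hence that $\psi$ is a homeomorphism onto $\psi(X)$, follow from the continuity of the functions $Sf$ and the weighted-composition formula by a routine net argument, exactly as in the proof of Theorem \ref{thm:BSTPdp}. Assuming the range is regular, I would then show $\psi(X)=Y$: if some $y\notin\overline{\psi(X)}$ existed, regularity would furnish $g=Sf$ vanishing on $\overline{\psi(X)}$ with $g(y)\ne0$, but $g(\psi(x))=k(x)f(x)=0$ for all $x$ forces $f=0$ and hence $g=0$, a contradiction; a further application of regularity upgrades denseness to equality. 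Once $\psi$ is a homeomorphism of $X$ onto $Y$ and $|k|\equiv1$, the identity $\|Sf\|=\sup_x|k(x)f(x)|=\|f\|$ holds for every $f$, so $S$ is an isometry, and surjectivity is immediate because $f\mapsto k\,(f\circ\psi^{-1})$ exhausts $C_0(Y)$.

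The main obstacle is the localization step. The naive hope that disjointly supported positive functions have disjoint peak sets is false: the phase analysis shows a point $y$ can be a common peak of up to three mutually disjoint unit functions, with phases mutually at $120^\circ$, so norm preservation alone cannot separate peaks. It is precisely the strong separation hypothesis on the range that must be leveraged to collapse these finite ambiguities to a single point and to make $\psi$ well defined; organizing this interplay carefully, rather than any single estimate, is where the real work lies.
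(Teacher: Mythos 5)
Your proposal sets up the right objects (peak sets of normalized positive functions peaking at $x_0$, the functionals $f\mapsto Sf(y)$, the affine family $f+tg$), but the central step --- localization --- is never actually proved, and the mechanism you propose for it provably cannot finish the job, as you yourself observe. The phase analysis from $\|S(f+tg)\|=\max(1,t)$ at a \emph{common} peak point of two disjointly supported unit positive functions only yields that the phases of $Sf(y)$ and $Sg(y)$ differ by at least $120^\circ$, which admits the three-function ambiguity you describe; saying that strong separation ``must be leveraged'' to collapse this, and that ``organizing this interplay carefully \ldots is where the real work lies,'' is an acknowledgement that the heart of the lemma is missing, not an argument. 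A second, unaddressed hole: your plan intersects the peak sets $M(f)$ over a net of peaking functions with shrinking supports, but nothing in the proposal shows this intersection is nonempty --- each $M(f)$ is compact and nonempty, yet the finite intersection property is not automatic and needs a proof.

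The paper's proof fills exactly these two holes, by a route that avoids the $120^\circ$ obstruction altogether. It defines $S_x$ as the set of $y\in Y_\infty$ at which \emph{every} normalized positive function peaking at $x$ has unimodular image, i.e.\ the full intersection $\bigcap_f M(f)$, and proves nonemptiness by compactness plus averaging: if each $y$ had a peaking $f_y$ with $|Sf_y(y)|<1$, a finite subcover of $Y_\infty$ by the sets $\{|Sf_{y_i}|<1\}$ and the average $f=\frac{1}{n}(f_{y_1}+\cdots+f_{y_n})$ (still a peaking function) would force $\|Sf\|<1=\|f\|$. Then, for $y\in S_x$ and $g\in C_0(X)_+$ of norm one vanishing in a neighborhood of $x$, one picks $f$ peaking at $x$ with $fg=0$ and notes that $f+tg$ is again a peaking function at $x$ for every $t\in[0,1]$; hence $|Sf(y)+tSg(y)|=1$ identically in $t$, and this polynomial identity in $t$ (not a one-point phase estimate) forces $Sg(y)=0$. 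Via Urysohn's lemma and boundedness of $S$, the functional $g\mapsto Sg(y)$ is then a unimodular multiple of $\delta_x$ for each $y\in S_x$, and only at this point does strong separation enter: two distinct points $y_1,y_2\in S_x$ would satisfy $|Sg(y_1)|=|g(x)|=|Sg(y_2)|$ for all $g\in C_0(X)$, contradicting strong separation of the range, so $S_x$ is a singleton $\{\psi(x)\}$. This is the idea your proposal lacks: work inside the common peak set of \emph{all} peaking functions, so that the entire segment $f+tg$, $0\le t\le 1$, is available to kill $Sg(y)$, rather than reasoning at a common peak of just two disjoint functions, where the phase ambiguity is genuinely unavoidable.
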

\begin{proof}
Let  $X_\infty = X\cup\{\infty\}$
be the   one-point
compactification of $X$.
In the case $X$ is compact, the point $\infty$ at infinity will be isolated in $X$.
We identify
$$
C_0(X) = \{f\in C(X_\infty): f(\infty)=0\}.
$$
The same also applies to $Y$ and $C_0(Y)$.

For every point $x$ in $X$, set
$$
S_x := \{y\in Y_\infty : |Sf(y)|=1 \text{ for all $f$ in $C_0(X)_+$  with } f(x) = \|f\| = 1\}.
$$
Clearly, $S_x$ is a compact subset of $Y$.

First, we verify that $S_x$ is  nonempty.  Otherwise, for every point $y$ in $Y_\infty$ there is an $f_y$ in
$C_0(X)_+$ with $f_y(x)=\|f_y\|=1$, but $|Sf_y(y)|<1$.  Let
$$
V_y =\{z\in Y_\infty: |Sf_y(z)| < 1\}.
$$
Then $V_y$ is an open neighborhood of $y$ containing the point $\infty$ at infinity.
The open covering $Y_\infty = \cup_y V_y$ of the compact space $Y_\infty$ has a finite subcover
$$
Y_\infty = V_{y_1}\cup \cdots\cup V_{y_n}.
$$
Let
$$
f=\frac{1}{n}(f_{y_1} + \cdots + f_{y_n}).
$$
Clearly, $f\in C_0(X)_+$ with $f(x) = \|f\| = 1$,
and
$$
|Sf(y)| \leq \frac{1}{n}(|Sf_{y_1}(y)| + \cdots + |Sf_{y_n}(y)|) < 1, \quad\forall y\in Y_\infty.
$$
This forces $1=\|f\|=\|Sf\|<1$, a contradiction.

Next, we verify that $S_x$ contains exactly a single point in $Y$.
Otherwise, let $y_1, y_2$ be two distinct points in $S_x$.
In other words,
\begin{align}\label{eq:twopoints}
|Sf(y_1)| = |Sf(y_2)| = 1 \quad\text{whenever}\quad f\in C_0(X)_+ \text{ with } f(x)=\|f\| =1.
\end{align}
Let $g$ be in $C_0(X)_+$ with norm one vanishing in a neighborhood of $x$.
Let $f$ be any function in $C_0(X)_+$ with $fg=0$ and $f(x)=\|f\|=1$.
If follows from \eqref{eq:twopoints} that
$$
|S(f+tg)(y_1)| = |S(f+tg)(y_2)| = 1, \quad\forall t\in [0,1].
$$
This forces
$Sg(y_1) = Sg(y_2)=0$.
Consequently, dealing separately with the positive and negative parts of the real and imaginary parts of
a continuous function we have
$$
Sg(y_1) = Sg(y_2) = 0\quad\text{whenever } g \text{ in } C_0(X) \text{ vanishes in a neighborhood of } x.
$$
Utilizing Uryshon's Lemma and the boundedness of $S$, we have indeed
\begin{align}\label{eq:twopointzero}
Sg(y_1) = Sg(y_2) = 0\quad\text{whenever } g \text{ in } C_0(X) \text{ vanishes at } x.
\end{align}
Therefore, there are scalars $\lambda_1, \lambda_2$ such that
$$
Sg(y_i) = \lambda_i g(x), \quad\forall g\in C_0(X),\ i=1,2.
$$
By \eqref{eq:twopoints}, we have $|\lambda_1|=|\lambda_2|=1$, and thus
$|Sg(y_1)|=|Sg(y_2)|$ for all $g$ in $C_0(X)$.  This is absurd, since the range of $S$ strongly  separates points in $Y$.

Now, we can define a function $\psi: X\to Y$ such that $S_x =\{\psi(x)\}$.
As in deriving \eqref{eq:twopointzero}, we have
$$
f(x) = 0 \quad\implies\quad Sf(\psi(x))=0, \qquad \forall f\in C_0(X).
$$
This provides a scalar $k(x)$ such that
\begin{align}\label{eq:wco}
Sf(\psi(x)) = k(x)f(x), \quad\forall f\in C_0(X), \forall x\in X.
\end{align}
It follows from the definition of $S_x$ that $|k(x)|=1$ for all $x$
in $X$.
Consequently, $\psi$ is one-to-one on $X$.

We claim that $\psi$ is a homeomorphism from $X$ onto $\psi(X)$.
To this end, suppose $x_\lambda \to x$ in $X$ and
$y$ is any cluster point of $\psi(x_\lambda)$ in $Y_\infty$.
It follows from \eqref{eq:wco} that $|Sf(y)|= |f(x)|=|Sf(\psi(x))|$ for all $f$ in $C_0(X)$.
By the strict separability assumption, $y=\psi(x)$, and thus $\lim_\lambda \psi(x_\lambda)=\psi(x)$.
Conversely, assume that $\psi(x_\lambda)\to \psi(x)$ and $\{x_\lambda\}$ has a cluster point $z$ in $X_\infty$.
By \eqref{eq:wco} again,
$|f(x)|=|f(z)|$ for all $f$ in $C_0(X)$.  This forces $z=x$, and $x=\lim_\lambda x_\lambda$ in $X$.

It is now plain that  $k$ is continuous on $X$.

Finally, if the range of $S$ is regular then \eqref{eq:wco} ensures that $\psi(X)$ is dense in $Y$.
Consequently, $S$ is a surjective linear isometry.
Applying what we have obtained to the inverse $S^{-1}: C_0(Y)\to C_0(X)$, we can verify that $\psi$ is
invertible and especially $\psi(X)=Y$.
\end{proof}

\begin{theorem}[Banach-Stone Theorem for $n$-isometries]\label{thm:BSTPiso}
Let $P: C_0(X)\rightarrow C_0(Y)$ be an orthogonally additive
$n$-homogeneous polynomial. Assume that $P$ is an $n$-isometry on positive elements, and its range is regular.
Then there exist a continuous unimodular scalar function $h$
and a homeomorphism $\varphi: Y\rightarrow X$ such that
$$
P(f)(y) = h(y)(f(\varphi(y)))^n, \qquad \forall\; f \in C_0(X), \;\forall\; y \in Y.
$$
\end{theorem}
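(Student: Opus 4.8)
The plan is to reduce the statement to the linear result already proved in Lemma~\ref{lem:bs-isom}, applied not to $P$ itself but to its linearization. First I would invoke Theorem~\ref{thm:PV} to obtain a bounded linear operator $T:C_0(X)\to C_0(Y)$ with $P(f)=T(f^n)$ for all $f\in C_0(X)$. The key preliminary observation is that $T$ preserves the norms of positive functions: given $g\in C_0(X)_+$, its positive $n$-th root $g^{1/n}$ lies in $C_0(X)_+$ and satisfies $(g^{1/n})^n=g$, so $T(g)=P(g^{1/n})$, and the $n$-isometry hypothesis on positive elements yields
$$
\|T(g)\|=\|P(g^{1/n})\|=\|g^{1/n}\|^n=\|g\|.
$$
Thus $T$ is exactly the kind of map treated by Lemma~\ref{lem:bs-isom}.

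Next I would transfer the regularity assumption from the range of $P$ to the range of $T$. Since $P(f)=T(f^n)$, the range of $P$ is contained in the range of $T$; and both regularity and the strong separation property are monotone, i.e.\ they pass to any larger subset of $C_0(Y)$ (the same witnessing functions still work). Moreover a regular family automatically separates points strictly, hence strongly: for distinct $y_1,y_2$, applying regularity with the closed set $Y_0=\{y_2\}$ and the point $y_1\notin Y_0$ produces an $f$ with $f(y_2)=0\neq f(y_1)$. Consequently the range of $T$ is regular and strongly separates points in $Y$, so $T$ meets every hypothesis of Lemma~\ref{lem:bs-isom}.

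Applying that lemma, I obtain a homeomorphism $\psi$ of $X$ onto $Y$ (surjectivity coming from the regularity of the range, which makes $T$ a surjective isometry with $\psi(X)=Y$) together with a continuous unimodular function $k$ on $X$ such that $T(f)(\psi(x))=k(x)f(x)$ for all $f\in C_0(X)$ and $x\in X$. Setting $\varphi:=\psi^{-1}:Y\to X$, which is again a homeomorphism, and $h:=k\circ\varphi$, a continuous unimodular function on $Y$, I would compute for each $y\in Y$, writing $x=\varphi(y)$,
$$
P(f)(y)=T(f^n)(\psi(x))=k(x)\,f^n(x)=h(y)\,(f(\varphi(y)))^n,
$$
which is precisely the asserted representation.

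The individual steps are short, and I expect the only point demanding care to be the bookkeeping in the second paragraph: recognizing that the separation and regularity hypotheses, imposed on the range of $P$, must be routed through the inclusion $\operatorname{range}(P)\subseteq\operatorname{range}(T)$ so that they become hypotheses about the linearization $T$, which is the object to which the linear Banach--Stone lemma actually applies.
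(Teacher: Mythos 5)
Your proposal is correct and follows essentially the same route as the paper: linearize $P$ via $T$ with $P(f)=T(f^n)$, check that $T$ preserves norms of positive functions by taking $n$-th roots, transfer regularity of the range of $P$ to the range of $T$ through the inclusion $\operatorname{range}(P)\subseteq\operatorname{range}(T)$, apply Lemma~\ref{lem:bs-isom}, and set $\varphi=\psi^{-1}$, $h=k\circ\varphi$. The only difference is that you spell out explicitly (and correctly) that regularity implies strict, hence strong, separation of points, a step the paper leaves implicit when invoking the lemma.
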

\begin{proof}
Let $T:C_0(X)\to C_0(Y)$ be the bounded linear map associated to $P$ such that $P(f)=T(f^n)$ as in \eqref{eq:5}.
Since $P:C_0(X)\rightarrow C_0(Y)$ is an $n$-isometry on positive elements,  for every non-negative function $f$ in
$C_0(X)_+$ we have
$$
\|T(f)\| = \|P(\sqrt[n]{f})\| = \|\sqrt[n]{f}\|^n = \|f\|.
$$
Containing the regular subset $P(C_0(X))$ of  $C_0(Y)$, the range of $T$ is also  regular.
Lemma \ref{lem:bs-isom} applies and  yields
a homeomorphism $\psi$ from $X$ onto $Y$, and a continuous unimodular scalar function $k$ on $X$ such that
$$
Tf(\psi(x))=k(x)f(x),\quad \forall\; f\in C_0(X), \;\forall\; x\in X.
$$
Letting $\varphi:=\psi^{-1}$ and $h:=k\circ\varphi$, we arrive at the desired assertions.
\end{proof}

Without a tool  similar to Lemma \ref{lem:bs-isom},
we need extra assumptions in developing the following counterpart of Theorem \ref{thm:BSTPiso}.

\begin{theorem}\label{thm:C-isom}
Let $A, B$ be unital C*-algebras.  Let $P:A\to B$ be an orthogonally additive $n$-homogeneous polynomial.
Suppose that
$h:=P(1)$ is a unitary, $B=\operatorname{span} P(A)$, and $\|P(x)\| = \|x\|^n$ for every normal element $x$ in $A$.  Then there is a Jordan
$*$-isomorphism $J:A\to B$ such that
$$
P(a) = hJ(a)^n, \quad\forall a\in A.
$$
\end{theorem}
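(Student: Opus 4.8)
The plan is to reduce the statement to the classical theorem of Kadison that a unital linear isometry between the self-adjoint parts of two C*-algebras is (the restriction of) a Jordan $*$-isomorphism. First I would linearize: by Theorem \ref{thm:C-form} there is a bounded linear operator $T:A\to B$ with $P(a)=T(a^n)$ for all $a$, and then $h=P(1)=T(1)$ is the given unitary. Every positive $b$ satisfies $b=(b^{1/n})^n$ by functional calculus, so positive elements lie in $\{a^n:a\in A\}$; since positive elements span $A$, we get $\operatorname{span}\{a^n:a\in A\}=A$ and hence $T(A)=\operatorname{span}\{T(a^n)\}=\operatorname{span}P(A)=B$, so $T$ is surjective. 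Set $J:=h^{*}T$; it is a surjective bounded linear map with $J(1)=h^{*}h=1$.

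The key computational input is that $T$ is isometric on a large supply of normal elements: whenever a normal $x\in A$ has a normal $n$-th root $y$ (i.e.\ $y=\vp(x)$ for a continuous $n$-th root $\vp$ on the spectrum of $x$), we get $\|T(x)\|=\|P(y)\|=\|y\|^{n}=\|x\|$. I would use this twice. When $x=b$ is self-adjoint, a continuous $n$-th root exists on the real spectrum (take $\vp(t)=t^{1/n}$ for $t\ge0$ and $\vp(t)=|t|^{1/n}e^{i\pi/n}$ for $t<0$), so $\|J(b)\|=\|T(b)\|=\|b\|$ and $J$ is isometric on $A_{sa}$. When $x=1+itb$ with $b\in A_{sa}$ and $t\in\mathbb{R}$, the spectrum lies on the vertical line $\{\operatorname{Re}z=1\}$, away from $0$, so $x$ again has a normal $n$-th root, giving $\|1+itJ(b)\|=\|J(1+itb)\|=\|T(1+itb)\|=\|1+itb\|=\sqrt{1+t^{2}\|b\|^{2}}=1+o(t)$.

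Next I would show $J(A_{sa})\subseteq B_{sa}$. The estimate $\|1+itJ(b)\|=1+o(t)$ for all real $t$ forces the numerical range of $J(b)$ to be real, and in a C*-algebra such hermitian elements are self-adjoint; hence $J(b)=J(b)^{*}$ for every $b\in A_{sa}$. Combining this with the surjectivity of $J$ and complex linearity, the decomposition into real and imaginary parts shows that $J|_{A_{sa}}$ is a bijection of $A_{sa}$ onto $B_{sa}$, which is unital and isometric by the previous step. Kadison's isometry theorem for self-adjoint parts then gives that the complex-linear extension $J$ is a Jordan $*$-isomorphism of $A$ onto $B$; in particular $J(a^{n})=J(a)^{n}$. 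Finally $P(a)=T(a^{n})=hJ(a^{n})=hJ(a)^{n}$, as desired.

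I expect the main obstacle to be the self-adjointness step. The norm hypothesis only directly controls $T$ on $n$-th powers of normal elements, so extracting that $J$ preserves self-adjointness requires feeding the \emph{normal} elements $1+itb$ into the $n$-th-root trick and converting the resulting first-order norm estimate into self-adjointness via the characterization of hermitian elements. Once $J|_{A_{sa}}$ is identified as a unital surjective isometry, the appeal to Kadison's theorem and the final substitution are routine.
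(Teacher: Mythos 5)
Your proof is correct, and its skeleton matches the paper's: linearize via Theorem \ref{thm:C-form}, pass to the unital map $J=h^{*}T$, show that $J$ restricts to a unital surjective isometry of $A_{sa}$ onto $B_{sa}$, and finish with the theorem that such isometries are Jordan isomorphisms. But you implement the two middle steps by genuinely different means. The paper first computes $\|T\|=1$ from the exponential-unitary formula $\|T\|=\sup\{\|T(e^{ik})\|:k^{*}=k\in A\}$ (citing \cite[Theorem 2.14.5]{Li92}), then proves $\|T(x)\|=\|x\|$ for \emph{every} normal $x$ by extracting a normal $n$-th root of the shifted element $2\alpha+x$, where $\alpha\in\sigma(x)$ with $|\alpha|=\|x\|$, so that the spectrum stays away from $0$; it then invokes \cite[Lemma 8]{Kadison51} to conclude $T(a^{*})=T(a)^{*}$, and applies the Wright--Youngson theorem \cite[Theorem 4]{WY78} to $T_{sa}$. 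You avoid both the norm computation and Kadison's lemma: the isometry on $A_{sa}$ comes from your explicit continuous branch of the $n$-th root on $\mathbb{R}$, and self-adjointness of $J(b)$ comes from feeding the normal elements $1+itb$ (spectrum on $\{\operatorname{Re}z=1\}$, hence again admitting a normal root) into the hypothesis to get $\|1+itJ(b)\|=1+o(t)$, whence $J(b)$ is hermitian in the numerical-range sense and therefore self-adjoint. This trades Russo--Dye plus Kadison's Lemma 8 for the standard Vidav characterization of hermitian elements, and is slightly more self-contained; both routes exploit the same underlying device of taking roots of normal elements whose spectrum permits a continuous branch. One bibliographic caveat: the result you call ``Kadison's isometry theorem for self-adjoint parts'' --- that a unital surjective real-linear isometry $A_{sa}\to B_{sa}$ is a Jordan isomorphism --- is exactly what the paper takes from \cite{WY78}; Kadison's 1951 theorem is usually stated for complex-linear surjective isometries of the full algebras, so to use it in your form you should either pass through the order-isomorphism version of Kadison's results (unital isometries of order-unit spaces preserve order) or cite Wright--Youngson directly, as the paper does.
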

\begin{proof}
As in the proof of Theorem \ref{thm:C-dp}, we have a bounded surjective linear operator $T:A\to B$
such that $P(a)=T(a^n)$, $\forall a\in A$.
Replacing $P$ with $h^*P$, we can assume $T(1)=1$.
We are going to show that $T$ is a Jordan $*$-isomorphism.

We compute the norm of $T$ with the following formula (see, e.g., \cite[Theorem 2.14.5]{Li92})
\begin{align*}
\|T\|
&= \sup\{\|T(e^{ih})\| : h^*=h\in A\}\\
&= \sup\{\|P(e^{ih/n})\| : h^*=h\in A\}= 1.
\end{align*}
Let $x$ be a normal element in $A$.  Let $\alpha$ be a point in the spectrum of $x$ such that $\|x\|=|\alpha|$.
%Then the point $0$ can be separated by a hyperplane from the spectrum of  $2\alpha + x$ in the complex field.
By functional calculus, we can find a normal element $y$ in $A$ such that $y^n=2\alpha+x$.
Observe that
$$
\|2\alpha + T(x)\| = \|P(y)\|  =\|y\|^n = \|2\alpha + x\| = 3\|x\|.
$$
Hence, $\|x\|\leq \|Tx\|\leq \|x\|$.  In other words, $T$ preserves the norm of every normal
element in $A$.  By \cite[Lemma 8]{Kadison51}, $T(a^*)=T(a)^*$ for every $a$ in $A$.
Consequently, $T$ induces a surjective unital linear isometry $T_{sa}$ between the JB-algebras $A_{sa}$
and $B_{sa}$.
It follows from \cite[Theorem 4]{WY78} that $T_{sa}$ is a Jordan isomorphism from  $A_{sa}$
onto $B_{sa}$, and thus
$T$ is a Jordan $*$-isomorphism from $A$ onto $B$.
\end{proof}

The following is a consequence of Theorem  \ref{thm:C-isom}, and the well-known facts about the structure of Jordan *-isomorphisms between
standard C*-algebras (see, e.g., \cite{Monlar00, Monlar02}).

\begin{corollary}\label{cor:isom-BH}
Let $\cH$ be a complex Hilbert space of arbitrary dimension.
Let $A$ be a unital standard C*-algebra on $\cH$.
Let $P:A\to A$ be a bounded orthogonally additive $n$-homogeneous polynomial such that
$A=\operatorname{span} P(A)$.
Suppose $h:=P(1)$ is a unitary and $\|P(a)\|=\|a\|^n$ for every normal operator $a$ in $A$.
Then there are unitary operators $U,V$
    in $B(\cH)$ such that either
    $$
    P(a) = Ua^nV,\ \forall a\in A\quad\text{or}\quad P(a) = U(a^t)^nV,\ \forall a\in A.
    $$
\end{corollary}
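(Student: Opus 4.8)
The plan is to apply Theorem \ref{thm:C-isom} and then read off the spatial form of the resulting Jordan $*$-isomorphism. First I would verify that the hypotheses of Theorem \ref{thm:C-isom} hold with $B=A$: the algebra $A$ is unital by assumption, $h=P(1)$ is a unitary, $A=\operatorname{span}P(A)$, and $\|P(x)\|=\|x\|^n$ holds for every normal element of $A$. Theorem \ref{thm:C-isom} then supplies a Jordan $*$-isomorphism $J:A\to A$ with
$$
P(a)=hJ(a)^n,\qquad\forall a\in A.
$$

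The second step is to pin down the structure of $J$. Since $A$ is a standard C*-algebra on $\cH$, it contains $K(\cH)$ as an essential ideal and is therefore prime; consequently a Jordan $*$-isomorphism of $A$ cannot split as a genuine mixture of a $*$-isomorphism and a $*$-anti-isomorphism, so $J$ is \emph{either} a $*$-isomorphism \emph{or} a $*$-anti-isomorphism. By the structure theory of such maps on standard C*-algebras (see \cite{Monlar00, Monlar02}), $J$ is spatially implemented: there is a unitary $W$ on $\cH$ such that either
$$
J(a)=WaW^*,\ \forall a\in A,\qquad\text{or}\qquad J(a)=Wa^tW^*,\ \forall a\in A,
$$
where $a^t$ denotes the transpose relative to a fixed orthonormal basis of $\cH$.

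The final step is a direct substitution. In the first case $J(a)^n=Wa^nW^*$, so $P(a)=hWa^nW^*$, and setting $U=hW$ and $V=W^*$ — both unitaries, being a product and an adjoint of unitaries — yields $P(a)=Ua^nV$. In the second case $J(a)^n=W(a^t)^nW^*$, and the same choice of $U,V$ gives $P(a)=U(a^t)^nV$, as desired.

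The only genuinely substantive point is the appeal to the structure theory in the second step: one must ensure that the Jordan $*$-isomorphism does not decompose into incompatible pieces over a nontrivial central partition, and — crucially — that the implementing operator can be taken \emph{unitary} rather than merely invertible. This unitarity is exactly what the $*$-preserving (isometric) hypothesis buys us over the purely zero-product-preserving setting of Corollary \ref{cor:BH}, where only an invertible $S$ together with a nonzero scalar factor $\lambda$ appear; here the scalar is absorbed into the unitaries $U$ and $V$.
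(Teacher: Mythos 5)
Your proposal is correct and follows exactly the paper's route: the paper proves this corollary by citing Theorem \ref{thm:C-isom} to obtain $P(a)=hJ(a)^n$ with $J$ a Jordan $*$-isomorphism, and then invoking the structure theory of \cite{Monlar00, Monlar02} for standard C*-algebras, which is precisely your argument. The details you supply — primeness of $A$ (hence no mixing of the isomorphism and anti-isomorphism parts), unitary implementation of the resulting $*$-(anti-)isomorphism, and absorbing the unitary $h$ into $U=hW$, $V=W^*$ — are exactly what the paper leaves implicit.
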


\end{document}